%
%
%
%
%
%
%

%
%
%
\pdfpagewidth=8.5truein
\pdfpageheight=11truein
%

\documentclass[11pt]{amsart}
\usepackage{geometry}
\usepackage{amssymb, amsmath, amsthm, tikz,enumerate,mathrsfs}
\usepackage{pdfpages}
\usepackage{hyperref}
\usepackage{textcomp}
\usepackage{listings}

\geometry{letterpaper}

\usepackage{fullpage}

\newtheorem{lemma}{Lemma}[section]
\newtheorem{proposition}[lemma]{Proposition}
\newtheorem{theorem}[lemma]{Theorem}

\newtheorem{conjecture}[lemma]{Conjecture}

\newtheorem{question}[lemma]{Question}

\theoremstyle{definition}
\newtheorem{definition}[lemma]{Definition}

\makeatletter
\newcommand{\setword}[2]{%
  \phantomsection
  #1\def\@currentlabel{\unexpanded{#1}}\label{#2}%
}
\makeatother

\numberwithin{equation}{section}

\hyphenation{Ehrenborg}
\hyphenation{arrange-ments}

\newcommand{\Ball}{\mathbb{B}}

\newcommand{\Rrr}{\mathbb{R}}

\newcommand{\Hf}{\mathcal{H}}
\newcommand{\Tf}{\mathscr{T}}

\newcommand{\Tt}{\mathcal{T}}

\DeclareMathOperator{\cross}{cross}

\DeclareMathOperator{\Span}{Span}
\DeclareMathOperator{\Vol}{Vol}

\makeatletter
\@namedef{subjclassname@2020}{%
  \textup{2020} Mathematics Subject Classification}
\makeatother

\begin{document}

\title{Conjectures for cutting pizza with Coxeter arrangements}

\author{Richard Ehrenborg}

\address{Department of Mathematics, University of Kentucky, Lexington,
  KY 40506-0027, USA.\hfill\break \tt http://www.math.uky.edu/\~{}jrge/,
  richard.ehrenborg@uky.edu.}

\subjclass[2020]
{Primary
20F55, 
51F15; 
Secondary
51M20, 
51M25.} 

\keywords{
Coxeter arrangements,
$2$-structures,
Pizza Theorem,
reflection groups}

\date{\today.}

\begin{abstract}
We are interested in conjecturing the sign of the pizza quantity
$P(\Hf,\mathbb{B}(a,R))$ for the irreducible Coxeter arrangements $\Hf$ of
type $A_{n}$, where $n \equiv 2$ or $3 \bmod 4$, and
type $D_{n}$, where $n$ is odd.
Our approach is to express the pizza quantity in terms of
the pizza quantity of subarrangements known as $2$-structures,
and we obtain the first non-zero term in the multivariate Taylor expansion.
\end{abstract}

\maketitle

\section{Introduction}

For an hyperplane arrangement $\Hf$ in an inner product space $V$
let $\Tf$ denote the set of chambers of the arrangement~$\Hf$.
Let $T_{0}$ be a base chamber
and let the sign $(-1)^{T}$ be $-1$ to the number of hyperplanes that separate
the chamber $T$ from the base chamber $T_{0}$.
The {\em pizza quantity} 
for a measurable set $K$ in the space $V$
is defined in~\cite{EMR_pizza}
to be the alternating sum
\begin{align}
P(\Hf,K)
& = 
\sum_{T \in \Tf} (-1)^{T} \cdot \Vol(K \cap T) .
\label{equation_pizza_definition}
\end{align}
The notion to have in mind is that two friends are sharing a pizza.
They cut the pizza with the hyperplanes in $\Hf$ and then alternate
taking slices. The pizza quantity measures the difference in the amount
of pizza they each receive.

Recall that a Coxeter arrangement is an hyperplane arrangement which is preserved under
reflections in each of its hyperplanes, and the group generated is finite.
For balls and for Coxeter arrangements satisfying the parity condition,
the following result states that the two friends can share the pizza equally
as long as the ball contains the origin.
\begin{theorem}[Ehrenborg--Morel--Readdy~\cite{EMR_pizza}]
Let $\Hf$ be a Coxeter arrangement 
on a finite-dimensional
inner product space $V$ such that $|\Hf| \geq \dim(V)$.
Assume that the ball $\mathbb{B}(a,R)$ contains the origin,
that is, $R \geq \|a\|$.
\begin{itemize}
\item[(i)]
If the number of hyperplanes is strictly greater than the dimension of~$V$
and has the same parity as that dimension
then the pizza quantity $P(\Hf,\mathbb{B}(a,R))$ vanishes.
\item[(ii)]
If the number of hyperplanes is equal to the dimension of $V$
then
$P(\Hf,\mathbb{B}(a,R))$ is independent of the radius~$R$.
\end{itemize}
\label{theorem_second_in_introduction}
\end{theorem}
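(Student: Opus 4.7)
The plan is to reduce the pizza quantity to a single integral on the unit sphere, exploiting the hypothesis that the origin lies inside $\mathbb{B}(a,R)$. Setting $n = \dim(V)$, every ray from the origin in direction $u\in S^{n-1}$ meets $\mathbb{B}(a,R)$ in a segment $[0,\rho(u)]$ with
\[
\rho(u) = \langle a, u\rangle + \sqrt{\langle a, u\rangle^{2}+D},\qquad D:=R^{2}-\|a\|^{2}\geq 0.
\]
Decomposing each $\Vol(T\cap\mathbb{B}(a,R))$ into spherical integrals of $\rho(u)^{n}/n$ and taking the alternating sum yields
\[
P(\Hf,\mathbb{B}(a,R))\;=\;\frac{1}{n}\int_{S^{n-1}}\chi(u)\,\rho(u)^{n}\,d\sigma(u),
\]
where $\chi(u)\in\{\pm 1\}$ is the chamber-sign function. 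This $\chi$ is $W$-anti-invariant for the Coxeter group $W$, and under the antipode $\chi(-u)=(-1)^{|\Hf|}\chi(u)$ (the antipodal map crosses every hyperplane).

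The key observation is that $\rho(-u) = -\langle a,u\rangle+\sqrt{\langle a,u\rangle^{2}+D}$ is the other root of the quadratic $X^{2}-2\sqrt{c^{2}+D}\,X+D = 0$ with $c := \langle a,u\rangle$, so $\rho(u)\rho(-u) = D$. Substituting $u\mapsto -u$ in the spherical integral and applying the antipodal rule for $\chi$ produces
\[
\int_{S^{n-1}}\chi(u)\rho(u)^{n}\,d\sigma(u) = \tfrac{1}{2}\int_{S^{n-1}}\chi(u)\bigl[\rho(u)^{n}+(-1)^{|\Hf|}\rho(-u)^{n}\bigr]d\sigma(u).
\]
Under the parity hypothesis $|\Hf|\equiv n\pmod 2$ this bracketed expression is $\rho(u)^{n}+(-1)^{n}\rho(-u)^{n}$, which the Chebyshev-like recursion
\[
\rho^{k}\pm\rho(-\cdot)^{k} = 2\sqrt{c^{2}+D}\bigl(\rho^{k-1}\pm\rho(-\cdot)^{k-1}\bigr)-D\bigl(\rho^{k-2}\pm\rho(-\cdot)^{k-2}\bigr)
\]
shows is a genuine \emph{polynomial} $p(c,D)$ of $c$-degree exactly $n$, with leading coefficient $2^{n}$ and every lower-$c$-degree coefficient divisible by $D$.

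The final tool is the orthogonality
\[
\int_{S^{n-1}}\chi(u)\,q(u)\,d\sigma(u)\;=\;0\qquad\text{for every polynomial $q$ on $V$ with $\deg q<|\Hf|$,}
\]
proved by $W$-averaging: after reindexing, the integrand equals $\chi(u)$ times the $W$-alternating projection of $q$, and by Chevalley--Shephard--Todd every $W$-alternant is divisible by the discriminant $\prod_{H\in\Hf}\alpha_{H}$ of degree $|\Hf|$, hence vanishes when $\deg q<|\Hf|$. In case (i), $|\Hf|>n$, so $p(\langle a,u\rangle,D)$ has $u$-degree $n<|\Hf|$ and the integral vanishes, proving $P=0$. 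In case (ii), $|\Hf|=n$ and only the top $c^{n} = \langle a,u\rangle^{n}$ piece of $p$ (carrying coefficient $2^{n}$ and no factor of $D$) survives the orthogonality, yielding
\[
P(\Hf,\mathbb{B}(a,R)) \;=\; \frac{2^{n-1}}{n}\int_{S^{n-1}}\chi(u)\,\langle a,u\rangle^{n}\,d\sigma(u),
\]
which is manifestly independent of $R$.

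The main obstacle is verifying the Chebyshev parity claim --- that $\rho(u)^{n}+(-1)^{n}\rho(-u)^{n}$ really is polynomial in $c$ and $D$ with no residual square roots --- since the alternative sign choice leaves a factor of $\sqrt{c^{2}+D}$ behind. This is a straightforward induction from the recursion above, but it is the linchpin of the argument: without parity matching, the surviving $\sqrt{c^{2}+D}$ produces non-polynomial contributions to the spherical harmonic expansion that are not killed by the degree-based orthogonality, and indeed $P$ need not vanish in that case.
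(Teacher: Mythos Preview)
The paper does not itself prove this theorem; it is quoted from \cite{EMR_pizza} as background. The only related argument in the present paper is Proposition~\ref{proposition_n_k_same_parity}, which recovers part~(i) in the special case where every $2$-structure of $\Phi$ has type $A_{1}^{k}$: one writes $P(\Hf,\cdot)$ as a signed sum over $2$-structures (Theorem~\ref{theorem_2-structure}), observes that each summand is a polynomial in $a$ of degree at most $n$, and then uses $W$-skew-symmetry to force divisibility by the degree-$|\Hf|$ Jacobian.

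Your argument is correct and takes a genuinely different, more elementary route. You bypass $2$-structures entirely: the polar formula $P=\frac{1}{n}\int_{S^{n-1}}\chi(u)\,\rho(u)^{n}\,d\sigma$, the antipodal symmetrization via $\chi(-u)=(-1)^{|\Hf|}\chi(u)$, and the identity $\rho(u)^{n}+(-1)^{n}\rho(-u)^{n}=(c+s)^{n}+(c-s)^{n}$ with $s=\sqrt{c^{2}+D}$ (only even powers of $s$ survive the binomial expansion, so this is a polynomial in $c,D$ with $p(c,0)=2^{n}c^{n}$) reduce everything to the Chevalley fact that $W$-alternants are divisible by the Jacobian. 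This handles all Coxeter arrangements uniformly, not just those with $A_{1}^{k}$-type $2$-structures, and it delivers the explicit $R$-independent formula in case~(ii), which the paper does not attempt. Conversely, the $2$-structure machinery, while heavier, is what yields the degree-by-degree Taylor expansion (Theorem~\ref{theorem_Taylor_expansion}) that drives the non-parity conjectures; in that regime your symmetrized integrand retains a factor $\sqrt{c^{2}+D}$ and gives no immediate polynomial structure, exactly as you note in your final paragraph.
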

This result was known in a few cases:
For the type $B_{2}$ it was stated as a problem by Upton~\cite{Upton}.
For the dihedral arrangement $I_{2}(2k)$, where $k \geq 2$,
this result was first proved by Goldberg~\cite{Gol}.
For a product arrangement of type $A_{1} \times I_{2}(2k)$, again $k \geq 2$,
it was proved by Frederickson~\cite{Fred}.
Furthermore, Brailov had proved the result independently
for an arrangement of type $B_{n}$ in~\cite{Brailov}.

Note that Theorem~\ref{theorem_second_in_introduction} does not
yield a satisfactory answer for the arrangements
where the parities of the number of hyperplanes and the dimension differ.
The irreducible types of Coxeter arrangements that do not satisfy the parity condition are
$I_{2}(2k+1)$ where $k \geq 1$,
$A_{n}$ where $n \equiv 2$ or $3 \bmod 4$, and
$D_{n}$ where $n$ is odd.
For the two-dimensional dihedral arrangements
Mabry and Deiermann~\cite{MaDe}
obtained the following result.
\begin{theorem}[Mabry--Deiermann~\cite{MaDe}]
Let $\Hf$ be an arrangement of type $I_{2}(m)$ where $m \geq 3$ and $m$ is odd.
Let $a$ be a point in the interior of a chamber $T$ of the arrangement
such that $\|a\| \leq R$.
Then the sign of the pizza quantity is given by
\begin{align*}
(-1)^{(m-3)/2} \cdot (-1)^{T} \cdot P(\Hf,\mathbb{B}(a,R)) & > 0 .
\end{align*}
\end{theorem}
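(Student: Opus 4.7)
My approach is harmonic-analytic. First, I swap the sum and integral in~\eqref{equation_pizza_definition} to obtain $P(\Hf,\mathbb{B}(a,R)) = \int_{\mathbb{B}(a,R)}\sigma(x)\,dx$, where $\sigma(x) = (-1)^T$ on the chamber $T$. Since $R\geq\|a\|$, radial integration in polar coordinates $(s,\psi)$ based at the origin gives $P = \tfrac{1}{2}\int_0^{2\pi}\sigma(\psi)\,s_+(\psi)^2\,d\psi$, where $s_+(\psi)$ is the distance from the origin to $\partial\mathbb{B}(a,R)$ along angle $\psi$. Writing $a=(r\cos\varphi,r\sin\varphi)$ and $\alpha=\psi-\varphi$ yields $s_+(\psi)^2 = (R^2-r^2) + 2r^2\cos^2\alpha + 2r\cos\alpha\sqrt{R^2-r^2\sin^2\alpha}$, and the $\psi$-constant piece $(R^2-r^2)/2$ drops out because $\sigma$ has mean zero.

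Next, for $m$ odd, $\sigma$ is antiperiodic under $\psi\mapsto\psi+\pi/m$, so its Fourier expansion involves only the frequencies $\pm(2k+1)m$, namely
\[
\sigma(\psi) = \sum_{k\geq 0}\frac{4}{\pi(2k+1)}\sin((2k+1)m\psi).
\]
Expanding the $\alpha$-even integrand in a cosine series $\sum_n a_n(r,R)\cos(n\alpha)$ and invoking orthogonality in $\psi$, the pizza quantity collapses to
\[
P(\Hf,\mathbb{B}(a,R)) = \sum_{k\geq 0}\frac{4\sin((2k+1)m\varphi)}{2k+1}\,a_{(2k+1)m}(r,R).
\]
Using $\sqrt{R^2-r^2\sin^2\alpha} = R\sum_j\binom{1/2}{j}(-1)^j(r/R)^{2j}\sin^{2j}\alpha$ and noting that $\cos\alpha\sin^{2j}\alpha$ has top frequency $2j+1$, the first contribution to $a_m$ appears at $j=(m-1)/2$ and equals $\binom{1/2}{(m-1)/2}\cdot 2^{1-m}\,r^m R^{2-m}+O(r^{m+2})$.

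Writing $\varphi = (j+\epsilon)\pi/m$ with $T = T_j$ and $\epsilon\in(0,1)$, the identity $\sin((2k+1)m\varphi) = (-1)^j\sin((2k+1)\epsilon\pi)$ extracts a factor $(-1)^T$. The $k=0$ term in the resulting series has sign equal to that of $\binom{1/2}{(m-1)/2}\sin(\epsilon\pi)$, which is $(-1)^{(m-3)/2}$, since $\binom{1/2}{n}$ has sign $(-1)^{n-1}$ for $n\geq 1$ and $\sin(\epsilon\pi)>0$. This establishes the claim for $a$ close to the origin. \emph{The main obstacle} is extending this sign determination from the leading-order term to all of $0<\|a\|\leq R$: one must control the higher Fourier coefficients $a_{(2k+1)m}(r,R)$, each of which unfolds to a combination of complete elliptic integrals in $r/R$, and verify that the aggregated series cannot reverse the leading sign throughout $\epsilon\in(0,1)$ and $r\in(0,R]$. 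This is the technical heart of Mabry--Deiermann's argument and would proceed by expressing the coefficients in closed form and invoking a positivity or monotonicity property in $r$.
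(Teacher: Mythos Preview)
The paper does not prove this theorem: it is quoted as a result of Mabry and Deiermann~\cite{MaDe} and serves only as motivation for Conjecture~\ref{conjecture_conjecture}. There is therefore no ``paper's own proof'' to compare against; the relevant comparison is with the original argument in~\cite{MaDe}, which your sketch explicitly follows.

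Your setup is sound. The polar-integration identity $P=\tfrac12\int_0^{2\pi}\sigma(\psi)\,s_+(\psi)^2\,d\psi$ is correct under the hypothesis $R\ge\|a\|$, the computation of $s_+^2$ is right, and the Fourier decomposition of the square wave $\sigma$ together with orthogonality correctly isolates the frequencies $(2k+1)m$. (A minor point: the overall constant in your displayed formula for $P$ should be $2$ rather than $4$, but this does not affect the sign.) Your leading-order analysis via $\binom{1/2}{(m-1)/2}$ is also correct and indeed shows the conjectured sign for $a$ sufficiently close to the origin; this is essentially the $I_2(m)$ instance of what the present paper does in Theorem~\ref{theorem_Taylor_expansion} for types $A_n$ and $D_n$.

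However, as you yourself say, the proposal is not a proof: the entire content of the Mabry--Deiermann theorem beyond the infinitesimal statement lies in the step you label ``the main obstacle,'' namely showing that the full series $\sum_{k\ge 0}\frac{2\sin((2k+1)\epsilon\pi)}{2k+1}\,a_{(2k+1)m}(r,R)$ retains the sign of its $k=0$ term for all $r\in(0,R]$ and $\epsilon\in(0,1)$. You have not supplied any estimate on the higher coefficients $a_{(2k+1)m}$, nor any monotonicity or positivity argument, so the claimed inequality is established only in a neighborhood of the origin. To complete the argument along these lines one must actually carry out the analysis in~\cite{MaDe} (or an equivalent), which involves explicit control of these coefficients; merely naming the obstacle does not discharge it.
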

In terms of pizza, assume that you and your friend are sharing a two-dimensional pizza
and you would like to maximize the amount of pizza you receive. If you cut the pizza with
$m \equiv 3 \bmod 4$ lines in a dihedral arrangement
you should make sure you take the slice containing the center of the pizza.
On the other hand, if
$m \equiv 1 \bmod 4$ you should avoid the slice containing the center.

Hence it is natural to conjecture the following:

\begin{conjecture}[Ehrenborg--Morel--Readdy~\cite{EMR_pizza}]
Let $\Hf$ be a Coxeter arrangement and let $a\in V$ such that $\|a\|\leq R$.
Assume that the number of hyperplanes of $\Hf$ is greater than the dimension of~$V$
and that these two quantities differ in their parities.
Then the pizza quantity~$P(\Hf,\Ball(a,R))$ 
is zero if and only if the center~$a$ lies
on one of the hyperplanes of~$\Hf$.
\label{conjecture_conjecture}
\end{conjecture}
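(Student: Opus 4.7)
The biconditional splits into an ``if'' direction, which I would handle by reflection symmetry, and an ``only if'' direction, which is the substantive one and which I would approach by combining the $2$-structure decomposition advertised in the abstract with a multivariate Taylor analysis of the function $a \mapsto P(\Hf,\Ball(a,R))$.

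For the ``if'' direction, suppose $a \in H_{0}$ for some $H_{0} \in \Hf$. Then $H_{0}$ passes through the centre of $\Ball(a,R)$, so the orthogonal reflection $s_{H_{0}}$ preserves the ball and, as an element of the underlying Coxeter group~$W$, permutes the chambers of~$\Hf$. Writing $T = w(T_{0})$, the chamber sign equals the determinantal character $(-1)^{T} = \det(w)$; hence $(-1)^{s_{H_{0}}(T)} = -(-1)^{T}$. Reindexing the sum~(\ref{equation_pizza_definition}) by $T \mapsto s_{H_{0}}(T)$ and using the isometry invariance of volume yields $P(\Hf,\Ball(a,R)) = -P(\Hf,\Ball(a,R))$, so the pizza quantity vanishes.

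For the ``only if'' direction, observe that $a \mapsto P(\Hf,\Ball(a,R))$ is real-analytic on the open ball $\|a\| < R$, since each $\Vol(T \cap \Ball(a,R))$ depends analytically on the centre. By the ``if'' direction this function vanishes on the hyperplane variety $\bigcup_{H \in \Hf} H$. Choosing a defining linear form $\alpha_{H}$ for each $H$, a local factorisation together with a removable-singularity argument produces an analytic function $Q$ on the open ball with
\begin{align*}
P(\Hf,\Ball(a,R)) & = Q(a) \cdot \prod_{H \in \Hf} \alpha_{H}(a),
\end{align*}
so the conjecture reduces to showing that $Q$ is nonvanishing on $\{\|a\| < R\}$. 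To control the leading behaviour of $Q$ I would apply the $2$-structure formalism and rewrite $P(\Hf,\Ball(a,R))$ as a weighted sum of pizza quantities of subarrangements which split as disjoint unions of $A_{1}$ factors and of dihedral factors of odd order; the Mabry--Deiermann theorem then governs the sign of each dihedral piece. Expanding the resulting expression in a multivariate Taylor series at $a = 0$ and extracting the first nonzero term should identify it as a nonzero scalar multiple of $\prod_{H}\alpha_{H}(a)$, thereby forcing $Q(0) \neq 0$ and establishing the conjecture in a neighbourhood of the origin.

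The main obstacle is that this Taylor argument only rules out zeros of $P$ off the hyperplanes in a neighbourhood of $a = 0$, whereas the conjecture demands non-vanishing on the whole open ball. A natural way forward would be to prove the stronger conclusion, modelled on Mabry--Deiermann, that the sign of $P(\Hf,\Ball(a,R))$ depends only on the chamber of $\Hf$ containing~$a$, which would eliminate further zeros of $Q$ one chamber at a time. The chief difficulty I anticipate is transporting the geometric sliver-cancellation that drives the two-dimensional Mabry--Deiermann proof through the product of dihedral factors produced by a $2$-structure; producing such a compatible geometric decomposition in higher rank seems to be where the bulk of the work lies.
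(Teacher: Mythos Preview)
The statement you are attempting to prove is presented in the paper as a \emph{conjecture}, not a theorem; the paper offers no proof, only computational evidence in low dimensions (Tables~\ref{table_A} and~\ref{table_D}) and a refinement predicting the sign (Conjecture~\ref{conjecture_A_and_D}). Your ``if'' direction is correct and is the easy half. Your ``only if'' strategy---factor out the Jacobian via analyticity, then show the leading Taylor term is a nonzero multiple of the Jacobian using the $2$-structure decomposition---is precisely the programme the paper pursues (Theorem~\ref{theorem_Taylor_expansion}), but the paper does not establish nonvanishing of the leading coefficient $Z_{|\Phi^{+}|}(\Phi;a)/J(a)$ in general; it is computed case by case, and the computations stop at dimension~$7$. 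So the step you phrase as ``extracting the first nonzero term should identify it as a nonzero scalar multiple of $\prod_{H}\alpha_{H}(a)$'' is itself unproved in general.

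There is also a technical error in your description of $2$-structures: their irreducible components are of type $A_{1}$, $B_{2}$, or $I_{2}(2^{k})$ for $k \geq 3$---all with an \emph{even} number of hyperplanes---so the Mabry--Deiermann theorem about $I_{2}(m)$ with $m$ odd plays no role in the decomposition. For the types $A_{n}$ and $D_{n}$ treated here, the $2$-structures are in fact all of type $A_{1}^{k}$, and the relevant pizza quantities are analysed by a direct integral expansion rather than by invoking any dihedral result. You have, however, correctly identified the genuine obstruction: even granting a nonzero leading term, the Taylor argument controls only a neighbourhood of the origin, and extending non-vanishing to the full ball is exactly what remains open and is why the statement is still a conjecture.
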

This conjecture is basically
Conjecture~9.5 in~\cite{EMR_pizza}.
Our purpose in this paper is to generate data such that
we can predict the sign of the pizza quantity given the sign of the
open chamber that contains the center $a$, in the case when
the Coxeter arrangement has type $A_{n}$ or $D_{n}$.
Our approach for obtaining data follows the idea
of~\cite{EMR_pizza_2} of expressing the pizza quantity
in terms of pizza quantities of smaller subarrangements,
known as $2$-structures.
These subarrangements were introduced by Herb
in the study of the characters of discrete series representations~\cite{Herb-2S}.
In the cases we are interested in, the $2$-structures all have
the type $A_{1}^{k}$. For the type $A_{1}^{k}$ we obtain the multivariate Taylor series
for the pizza quantity, yielding the Taylor series for the desired
pizza quantity of types~$A_{n}$ and~$D_{n}$.
We were able to do the (computer) calculations up to and including
dimension~$7$, before the polynomials became too unwieldy.
Lastly, we return to type $D_{n}$, where we make a conjecture
that would imply the sign for arrangements of type $D_{n}$
where $n$~is odd.

\section{Coxeter arrangements, pseudo-root systems and $2$-structures}

Given two hyperplane arrangements $\Hf_{1}$ and $\Hf_{2}$ in the two spaces $V_{1}$
and $V_{2}$, respectively.
The product arrangement $\Hf_{1} \times \Hf_{2}$ in $V_{1} \times V_{2}$ is defined to be
\begin{align*}
\Hf_{1} \times \Hf_{2}
& =
\{H \times V_{2} : H \in \Hf_{1}\} \cup \{V_{1} \times H : H \in \Hf_{2}\} .
\end{align*}
A {\em Coxeter arrangement} $\Hf$ is a finite set of hyperplanes
such that the arrangement is invariant under the orthogonal reflections
in any of its hyperplanes, and the group (known as the Coxeter group) 
generated by these reflections is finite.
Note that the class of Coxeter arrangements is closed under
Cartesian products. Furthermore, call an arrangement
{\em irreducible} if it cannot be written as a Cartesian product.
The irreducible Coxeter arrangements have been classified;
see for instance~\cite{Bourbaki}.
Their types are
$A_{n}$ for $n \geq 1$;
$B_{n}$ for $n \geq 2$ (also known as type $B/C$);
$D_{n}$ for $n \geq 3$ (note that $D_{3}$ is isomorphic to $A_{3}$);
$E_{6}$, $E_{7}$ and $E_{8}$;
$F_{4}$;
$H_{3}$ and $H_{4}$;
and
the dihedral arrangement~$I_{2}(m)$ where $m \geq 2$
(observe that $I_{2}(2) \cong A_{1}^{2}$, $I_{2}(3) \cong A_{2}$ and $I_{2}(4) \cong B_{2}$).

Let $V$ be a finite-dimensional real vector space with an inner product
$(\cdot,\cdot)$.
For every non-zero vector $\alpha\in V$,
let $H_{\alpha} = \{x \in V : (\alpha,x) = 0\}$
denote the hyperplane orthogonal to $\alpha$,
and let $s_{\alpha}$ be the orthogonal reflection in 
the hyperplane $H_{\alpha}$.
\begin{definition}
A subset $\Phi$ of $V$ is a
{\em pseudo-root system} if:
\begin{itemize}
\item[(a)] $\Phi$ is a finite set of unit vectors;
\item[(b)] for all $\alpha,\beta\in\Phi$, we have $s_\beta(\alpha)\in\Phi$.
\end{itemize}
\end{definition}
Note that a consequence of condition~(b) is that $\alpha \in \Phi$ implies $-\alpha \in \Phi$
by setting $\alpha = \beta$.
Elements of $\Phi$ are called \emph{pseudo-roots}.
Let the {\em Coxeter group} $W$ be the group generated by all the reflections
$s_{\alpha}$ for $\alpha$ in the pseudo-root system $\Phi$.
For a group element~$w$ in $W$ the sign of the element $w$
is the sign of the chamber $w(T_{0})$, that is, $(-1)^{w} = (-1)^{w(T_{0})}$.
Note that this definition is same as defining the sign of $w$
as $(-1)^{\ell(w)}$ where $\ell$ is the length function in the Coxeter system.

The elements of a pseudo-root system $\Phi$ are naturally divided
into positive and negative roots,
that is, the following disjoint union holds: $\Phi = \Phi^{+} \sqcup \Phi^{-}$.
We use the characterization given in~\cite[Lemma~B.1.9]{EMR}.
Define the {\em lexicographic order} on $\Rrr^{r}$ as follows:
$(x_{1}, \ldots, x_{r}) <_{\text{lex}} (y_{1}, \ldots, y_{r})$
if there is an index $i$ such that
$x_{1} = y_{1}, \ldots, x_{i-1} = y_{i-1}$ and $x_{i} < y_{i}$.
Then Lemma~B.1.9 is the following statement.
\begin{lemma}
Let $\Phi$ be a pseudo-root system in~$V$
and assume that $v_{1}$ through~$v_{r}$ are linearly independent elements of~$V$
such that no pseudo-root of~$\Phi$ is orthogonal to every~$v_{i}$.
Then the set~$\Phi^{+}$ defined by
\begin{align*}
\Phi^{+}
& =
\{\alpha \in \Phi : ((\alpha,v_{1}), \ldots, (\alpha,v_{r})) >_{\text{lex}} (0, \ldots, 0) \}
\end{align*}
is a system of positive pseudo-roots.
\end{lemma}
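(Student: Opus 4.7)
The plan is to verify that the proposed set $\Phi^{+}$ satisfies the defining properties of a positive system of pseudo-roots, namely (i) that $\Phi$ is partitioned as $\Phi^{+} \sqcup (-\Phi^{+})$, and (ii) that $\Phi^{+}$ is closed under positive linear combinations that happen to land back in $\Phi$. The key observation is that the map $\lambda \colon V \to \Rrr^{r}$ defined by $\lambda(\alpha) = ((\alpha,v_{1}), \ldots, (\alpha,v_{r}))$ is linear, and that lexicographic order is compatible with this linear structure: scaling by a positive real preserves lexicographic sign, and the sum of two lexicographically positive vectors is lexicographically positive. Thus $\Phi^{+} = \{\alpha \in \Phi : \lambda(\alpha) >_{\text{lex}} 0\}$ behaves analogously to the preimage of a positive half-space under a linear functional.

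First I would handle the partition. Given $\alpha \in \Phi$, the hypothesis that no pseudo-root is orthogonal to every $v_{i}$ means that the tuple $\lambda(\alpha)$ has at least one non-zero coordinate, so $\lambda(\alpha) \neq (0,\ldots,0)$. Since $<_{\text{lex}}$ is a total order on $\Rrr^{r}$, exactly one of $\lambda(\alpha) >_{\text{lex}} 0$ or $\lambda(\alpha) <_{\text{lex}} 0$ holds. Using $\lambda(-\alpha) = -\lambda(\alpha)$ together with the fact that $\alpha \in \Phi$ forces $-\alpha \in \Phi$ (which follows from condition~(b) of the pseudo-root definition applied with $\alpha = \beta$), we conclude that exactly one of $\alpha$ and $-\alpha$ lies in $\Phi^{+}$, giving the disjoint union $\Phi = \Phi^{+} \sqcup (-\Phi^{+})$.

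Next I would handle the closure property. Suppose $\alpha, \beta \in \Phi^{+}$ and $c_{1}, c_{2} > 0$ are such that $\gamma := c_{1}\alpha + c_{2}\beta$ lies in $\Phi$. By linearity, $\lambda(\gamma) = c_{1}\lambda(\alpha) + c_{2}\lambda(\beta)$. Let $i$ and $j$ be the first indices where $\lambda(\alpha)$ and $\lambda(\beta)$, respectively, are non-zero (hence positive). Then the first non-zero coordinate of $\lambda(\gamma)$ occurs at index $\min(i,j)$ and is positive, so $\lambda(\gamma) >_{\text{lex}} 0$ and $\gamma \in \Phi^{+}$.

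The main obstacle is essentially bookkeeping: being careful that the lexicographic comparison interacts correctly with sums where the leading coordinates of $\lambda(\alpha)$ and $\lambda(\beta)$ sit at different positions. Once that is handled by the case analysis on $\min(i,j)$ above, no additional input is required beyond the linearity of the inner product, the total ordering property of $<_{\text{lex}}$, and the non-vanishing of $\lambda$ on $\Phi$ guaranteed by the hypothesis on $v_{1}, \ldots, v_{r}$.
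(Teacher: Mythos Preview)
The paper does not actually prove this lemma; it is quoted verbatim from \cite[Lemma~B.1.9]{EMR} and used as the working characterization of a positive system of pseudo-roots. So there is no in-paper argument to compare against.

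Your proof is correct under the standard definition of a positive system (a subset $\Phi^{+} \subseteq \Phi$ with $\Phi = \Phi^{+} \sqcup (-\Phi^{+})$ that is closed under positive linear combinations landing in $\Phi$). The linearity of $\lambda$ together with the compatibility of the lexicographic order with addition and positive scaling is exactly what is needed, and your case split on $\min(i,j)$ handles the closure step cleanly. One small remark: you never invoke the linear independence of $v_{1}, \ldots, v_{r}$; only the hypothesis that no pseudo-root is orthogonal to all of them is used, so the statement holds in slightly greater generality than written.
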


Given a pseudo-root system $\Phi$.
We obtain a Coxeter arrangement by considering
the collection $\Hf_{\Phi} = \{H_{\alpha} : \alpha \in \Phi^{+}\}$.
Moreover, the base chamber $T_{0}$ is
on the positive side on each hyperplane, that is,
$T_{0} = \bigcap_{\alpha \in \Phi^{+}} \{x \in V : (\alpha,x) \geq 0\}$.
Similarly, given a Coxeter arrangement
we obtain a pseudo-root system by considering the unit
vectors orthogonal to the hyperplanes.
The positive pseudo-roots are on the same side as the base chamber,
that is,
$\Phi^{+} = \{\alpha \in \Phi :  \forall x \in T_{0} \:\: (\alpha,x) \geq 0\}$.

For a root system $\Phi$ with positive roots $\Phi^{+}$
we define the Jacobian to be the product
\begin{align*}
J(a)
& =
\prod_{\alpha \in \Phi^{+}} (\alpha,a) .
\end{align*}
Note that the Jacobian is the polynomial which is zero on
all the hyperplanes in the arrangement associated with $\Phi$.
Moreover, any skew symmetric polynomial on $V$ factors
as the Jacobian times a polynomial invariant under $W$.

The two types of
irreducible Coxeter arrangements, equivalently pseudo-root systems,
we consider in this paper are the types $A_{n}$ and $D_{n}$.
For each type we present the positive pseudo-roots and the Jacobian.
\begin{itemize}
\item[$(A_{n})$]
Let $V$ be the $n$-dimensional space
$\{(x_{1}, \ldots, x_{n+1}) \in \Rrr^{n+1} : x_{1} + \cdots + x_{n+1} = 0\}$.
The positive pseudo-roots are
\begin{align*}
\Phi^{+}
& =
\{ (e_{i} - e_{j})/\sqrt{2} : 1 \leq i < j \leq n+1\} .
\end{align*}
In this type the Jacobian
is the Vandermonde product up to a scalar
\begin{align*}
J(a) & =
{2}^{- (n+1) n /4} \cdot 
\prod_{1 \leq i < j \leq n+1} (a_{i} - a_{j}) .
\end{align*}

\item[$(D_{n})$]
Let $V$ be $\Rrr^{n}$.
The positive pseudo-roots are
\begin{align*}
\Phi^{+}
& =
\{ (e_{i} \pm e_{j})/\sqrt{2} : 1 \leq i < j \leq n\} .
\end{align*}
The Jacobian in this type is given by
\begin{align*}
J(a) & =
{2}^{-n \cdot (n-1)/2} \cdot 
\prod_{1 \leq i < j \leq n} (a_{i} - a_{j}) \cdot (a_{i} + a_{j}) \\
& =
{2}^{-n \cdot (n-1)/2} \cdot 
\prod_{1 \leq i < j \leq n} (a_{i}^{2} - a_{j}^{2}) ,
\end{align*}
that is, the Vandermonde product in the
squares $a_{1}^{2}, \ldots, a_{n}^{2}$.
\end{itemize}

We now present $2$-structures and their basic properties.
Herb introduced them for crystaligraphic root systems
in order to study the characters of discrete series representations~\cite{Herb-2S}.
We use Definition~B.2.1 of~\cite{EMR},
which works for pseudo-root systems.
\begin{definition}
Let $\Phi$ be a pseudo-root system with Coxeter group $W$.
A \emph{$2$-structure} for $\Phi$ is a subset~$\varphi$ of~$\Phi$
satisfying the following properties:
\begin{itemize}
\item[(a)] 
The subset $\varphi$ is a disjoint union
$\varphi=\varphi_1\sqcup\varphi_2\sqcup\cdots\sqcup\varphi_r$,
where the $\varphi_i$ are pairwise orthogonal subsets
of $\varphi$ and each of them is an irreducible pseudo-root system of type
$A_1$, $B_2$ or $I_2(2^k)$ for $k\geq 3$.
\item[(b)] Let $\varphi^+=\varphi\cap\Phi^+$. If $w$ is an element in $W$ such that
$w(\varphi^+)=\varphi^+$ then the sign of $w$ is positive, that is, $(-1)^{w} = 1$.
\end{itemize}
\label{def_2_structure}
We denote by $\Tt(\Phi)$ the set of $2$-structures for $\Phi$.
\end{definition}

Furthermore, to each $2$-structure $\varphi$, we can associate a 
sign $\epsilon(\varphi)$; see~\cite[Section~3 and Appendix~B]{EMR} .
Two essential properties of the sign are described as follows.
\begin{lemma}
\begin{itemize}
\item[(a)]
Let $w$ be in the Coxeter group such that
$w(\varphi^{+}) \subseteq \Phi^{+}$. Then the following identity holds:
$\epsilon(w(\varphi)) = (-1)^{w} \cdot \epsilon(\varphi)$.
\item[(b)]
The alternating sum of all the signs of $2$-structures is equal to $1$, that is,
\begin{align*}
\sum_{\varphi \in \Tt(\Phi)} \epsilon(\varphi) & = 1 .
\end{align*}
\end{itemize}
\label{lemma_sign_for_2-structures}
\end{lemma}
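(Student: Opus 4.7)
The plan is to treat parts (a) and (b) in sequence, with (a) being essentially an unwinding of the definition of $\epsilon(\varphi)$ from Appendix~B of~\cite{EMR} and (b) then deduced from (a) via an orbit argument for the action of $W$ on $\Tt(\Phi)$.

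For (a), I would proceed by induction on the length $\ell(w)$. The base case $w = 1$ is immediate. For the inductive step, write $w = s \cdot w'$ where $s = s_{\alpha}$ is a simple reflection with simple root $\alpha$ and $\ell(w') = \ell(w) - 1$. The hypothesis $w(\varphi^{+}) \subseteq \Phi^{+}$ forces $w'(\varphi^{+}) \subseteq \Phi^{+}$ as well, since $s$ permutes $\Phi^{+} \setminus \{\alpha\}$ and sends $\alpha$ to $-\alpha$; any pseudo-root of $\varphi^{+}$ reaching $\alpha$ under $w'$ would become negative under $w$. The inductive hypothesis then gives $\epsilon(w'(\varphi)) = (-1)^{w'} \epsilon(\varphi)$, and it remains to prove the single-step identity $\epsilon(s(\varphi')) = -\epsilon(\varphi')$ for any $2$-structure $\varphi'$ with $s((\varphi')^{+}) \subseteq \Phi^{+}$. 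This reduces to noting that $s$ permutes $\Phi^{+} \setminus \{\alpha\}$, that $(\varphi')^{+}$ maps into this set, and that the definition of $\epsilon$, which records the relative position of $\varphi^{+}$ inside $\Phi^{+}$, picks up exactly one sign flip corresponding to the single reflected hyperplane $H_{\alpha}$.

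For (b), partition $\Tt(\Phi)$ into $W$-orbits. Fix an orbit and a representative $\varphi_{0}$; by Definition~\ref{def_2_structure}(b), the stabiliser of $\varphi_{0}^{+}$ inside $W$ lies in the kernel of the sign character, so the orbit is naturally indexed by the left cosets of this stabiliser, and the sign $(-1)^{w}$ descends to a well-defined function on those cosets. Part~(a), applied to coset representatives chosen so that $w(\varphi_{0}^{+}) \subseteq \Phi^{+}$, collapses the partial sum $\sum_{\varphi \in W \cdot \varphi_{0}} \epsilon(\varphi)$ to $\epsilon(\varphi_{0})$ times an alternating sum of $(-1)^{w}$ over coset representatives. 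To close the argument one verifies, either by direct computation in each irreducible type or by induction on the rank of $\Phi$ using the orthogonal decomposition $\varphi = \varphi_{1} \sqcup \cdots \sqcup \varphi_{r}$, that the contributions from all orbits telescope to $1$.

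The main obstacle will be the single-step computation at the end of the proof of~(a): the sign $\epsilon(\varphi)$ is a delicate combinatorial invariant, and matching its change under a simple reflection to precisely $-1 = (-1)^{s}$ requires careful bookkeeping of how $\varphi^{+}$ sits inside $\Phi^{+}$, in particular handling the irreducible components of $\varphi$ of type $B_{2}$ and $I_{2}(2^{k})$ alongside the easier $A_{1}$ components. Once (a) is secured, part~(b) is comparatively mechanical, but still requires verifying the base cases of the rank induction and checking compatibility of $\epsilon$ with the orthogonal decomposition of the $2$-structure.
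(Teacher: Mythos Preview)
The paper does not prove this lemma at all: immediately after the statement it writes ``For part~(a) see Lemma~B.2.10 in~\cite{EMR} and for part~(b) Proposition~3.1.1 in the same reference.'' So there is no argument in the paper to compare your proposal against; both parts are imported wholesale from~\cite{EMR}, where the sign~$\epsilon(\varphi)$ is actually defined. Your proposal is therefore not a variant of the paper's proof but an attempt to supply what the paper deliberately outsources.

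On its own merits, your sketch has a real gap. For part~(a), the inductive reduction to a simple reflection is fine (your claim that $w'(\varphi^{+})\subseteq\Phi^{+}$ is correct, once one also invokes the standard fact that $\ell(sw')>\ell(w')$ forces $(w')^{-1}(\alpha)\in\Phi^{+}$, ruling out $-\alpha\in w'(\varphi^{+})$). But the single-step identity $\epsilon(s(\varphi'))=-\epsilon(\varphi')$ is exactly where the content lies, and you do not prove it; you only say it ``requires careful bookkeeping.'' Since this paper never even writes down the definition of~$\epsilon$, there is no way to carry out that bookkeeping here, and the proof of~(a) is incomplete as stated.

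For part~(b), the orbit argument is more seriously underspecified. You conflate the stabiliser of $\varphi_{0}^{+}$ with the stabiliser of $\varphi_{0}$ as a set, and the passage from ``alternating sum over coset representatives'' to ``telescopes to~$1$'' is asserted rather than shown; ``direct computation in each irreducible type or induction on rank'' is a plan, not a proof. The actual argument in~\cite{EMR} (Proposition~3.1.1) is not a simple orbit count, so if you want a self-contained proof you would need to reproduce substantially more of the machinery from that appendix.
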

For part~(a) see~Lemma~B.2.10 in~\cite{EMR} and for part~(b)
Proposition~3.1.1 in the same reference.

Next we need Theorem~3.2.1 in~\cite{EMR}. However, we only need
it in the case when the valuation is the volume. Hence we can consider the
more straightforward case: when the valuation vanishes on lower
dimensional cones.
Let $\mathcal{C}(V)$ be the collection of cones in the vector space $V$
and let $\mathcal{B}(V)$ be the smallest Boolean algebra that contains
$\mathcal{C}(V)$ and is closed under unions and intersections.
A valuation $v$ on $\mathcal{B}(V)$ is a function such that
$v(\emptyset) = 0$ and $v(S) + v(T) = v(S \cap T) + v(S \cup T)$
for all $S, T \in \mathcal{B}(V)$.
For a hyperplane arrangement $\Hf$ and a valuation $v$ define
the pizza quantity with respect to the valuation $v$ as
the alternating sum
\begin{align*}
\Pi(\Hf, v)
& =
\sum_{T \in \Tf} (-1)^{T} \cdot v(T) .
\end{align*}
Theorem~3.2.1 in~\cite{EMR} 
implies the following statement.
\begin{theorem}
Let $\Hf$ be a Coxeter arrangement with pseudo-root system~$\Phi$.
Then the pizza quantity with respect to the valuation $v$
can be expressed as
\begin{align*}
\Pi(\Hf, v)
& =
\sum_{\varphi \in \Tt(\Phi)} \epsilon(\varphi) \cdot \Pi(\Hf_{\varphi}, v) ,
\end{align*}
where $\varphi$ ranges over all $2$-structures of the 
pseudo-root system~$\Phi$
and $\Hf_{\varphi}$ denotes the hyperplane arrangement associated
to the pseudo-root system $\varphi$.
\label{theorem_2-structure}
\end{theorem}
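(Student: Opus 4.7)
The approach is to reduce the identity to a chamber-wise combinatorial statement and then verify it using the two properties of the sign $\epsilon(\varphi)$ from Lemma~\ref{lemma_sign_for_2-structures}. Since $\Hf_{\varphi}$ is a subarrangement of $\Hf$, every chamber $C$ of $\Hf_{\varphi}$ decomposes (up to lower-dimensional boundaries) as the disjoint union of the chambers $T$ of $\Hf$ contained in $C$. Because the valuation $v$ vanishes on lower-dimensional cones, $v(C) = \sum_{T \in \Tf, \: T \subseteq C} v(T)$. Denoting by $C_{\varphi}(T)$ the unique chamber of $\Hf_{\varphi}$ containing~$T$, swapping the order of summation yields
\begin{align*}
\sum_{\varphi \in \Tt(\Phi)} \epsilon(\varphi) \cdot \Pi(\Hf_{\varphi}, v)
& =
\sum_{T \in \Tf} v(T) \cdot
\Biggl( \sum_{\varphi \in \Tt(\Phi)} \epsilon(\varphi) \cdot (-1)^{C_{\varphi}(T)} \Biggr) ,
\end{align*}
where $(-1)^{C_{\varphi}(T)}$ is the sign with respect to the base chamber $T_{0}^{\varphi}$ of $\Hf_{\varphi}$ (which contains $T_{0}$, since $\varphi^{+} \subseteq \Phi^{+}$). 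The theorem thus reduces to verifying, for every chamber $T \in \Tf$, the identity
\begin{align*}
(-1)^{T}
& =
\sum_{\varphi \in \Tt(\Phi)} \epsilon(\varphi) \cdot (-1)^{C_{\varphi}(T)} .
\end{align*}

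For $T = T_{0}$ the identity collapses to $\sum_{\varphi} \epsilon(\varphi) = 1$, which is part~(b) of Lemma~\ref{lemma_sign_for_2-structures}. For a general chamber $T = w(T_{0})$, the plan is to reindex by $\psi = w^{-1}(\varphi)$, which is a bijection on~$\Tt(\Phi)$. One verifies geometrically that $w^{-1}(C_{\varphi}(T)) = T_{0}^{\psi}$, so $C_{\varphi}(T) = w(T_{0}^{\psi})$, and the sign $(-1)^{C_{\varphi}(T)}$ equals $(-1)$ to the number of pseudo-roots $\alpha \in \varphi^{+}$ with $w^{-1}(\alpha) \in \Phi^{-}$. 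Part~(a) of Lemma~\ref{lemma_sign_for_2-structures} relates $\epsilon(\varphi) = \epsilon(w \psi)$ to $\epsilon(\psi)$, and after combining with the sign count above, the weight on each $\psi$ in the sum should reduce to $(-1)^{w} \cdot \epsilon(\psi)$. A second application of part~(b) of Lemma~\ref{lemma_sign_for_2-structures}, now to the sum over $\psi$, then yields $(-1)^{w} = (-1)^{T}$.

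The main obstacle is the sign bookkeeping in the reindexing step. Part~(a) of the sign lemma only applies when $w(\psi^{+}) \subseteq \Phi^{+}$, a condition that fails for most pairs $(w, \psi)$. For an arbitrary~$w$, one must separately track the pseudo-roots $\alpha \in \psi^{+}$ with $w(\alpha) \in \Phi^{-}$; these correspond precisely to the hyperplanes of $\Hf_{\varphi}$ that separate $C_{\varphi}(T)$ from $T_{0}^{\varphi}$, and their parity is exactly $(-1)^{C_{\varphi}(T)}$. Reconciling this geometric count with the algebraic transformation rule for~$\epsilon$ --- and showing that the two sign sources combine cleanly to yield $(-1)^{w}$ --- is the delicate part. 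This is where defining condition~(b) of a $2$-structure (that elements of~$W$ preserving~$\varphi^{+}$ have positive sign) is essential: it guarantees that $\epsilon(w \psi)$ is well defined as a function of the set $w \psi$, independent of how one writes it as a $W$-translate of~$\psi$, so the reindexed sum is genuinely canonical.
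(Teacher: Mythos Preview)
The paper does not prove this theorem: it states that the result is implied by Theorem~3.2.1 of~\cite{EMR} and gives no argument of its own. So there is nothing to compare your approach against inside this paper; what you have written is an independent proof sketch of a cited fact.

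Your chamber-wise reduction is correct and is the standard route. The identity
\[
(-1)^{T} \;=\; \sum_{\varphi \in \Tt(\Phi)} \epsilon(\varphi)\,(-1)^{C_{\varphi}(T)}
\]
for every chamber $T=w(T_{0})$ is exactly what is needed, and your reindexing $\psi=w^{-1}(\varphi)$ is the right move. However, you stop short of the computation you yourself flag as ``the delicate part,'' and your final paragraph talks around it rather than through it. Here is how to close the gap. With $\psi=w^{-1}\varphi$ and $N=|\{\beta\in\psi^{+}:w(\beta)\in\Phi^{-}\}|$, one checks geometrically (as you note) that $(-1)^{C_{\varphi}(T)}=(-1)^{N}$. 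Both $\varphi^{+}$ and $w(\psi^{+})$ are positive systems for the root system $\varphi=w\psi$, so there is $v'\in W_{\varphi}$ with $v'\bigl(w(\psi^{+})\bigr)=\varphi^{+}$; its length in $W_{\varphi}$ is $|w(\psi^{+})\cap\Phi^{-}|=N$, whence $(-1)^{v'}=(-1)^{N}$. Now $u=v'w$ satisfies $u(\psi^{+})=\varphi^{+}\subseteq\Phi^{+}$, so part~(a) of Lemma~\ref{lemma_sign_for_2-structures} gives
\[
\epsilon(\varphi)=\epsilon(u\psi)=(-1)^{u}\epsilon(\psi)=(-1)^{N}(-1)^{w}\epsilon(\psi),
\]
and therefore $\epsilon(\varphi)\,(-1)^{C_{\varphi}(T)}=(-1)^{w}\epsilon(\psi)$. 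Summing over $\psi$ and applying part~(b) finishes the argument. Your remark that condition~(b) in Definition~\ref{def_2_structure} makes $\epsilon(w\psi)$ ``well defined'' is beside the point: $\epsilon$ is defined on the set $\Tt(\Phi)$, not on expressions; what condition~(b) actually buys you here is that $w^{-1}$ preserves $\Tt(\Phi)$, so the reindexing is a bijection.
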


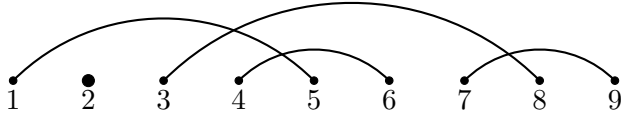
\begin{figure}
\begin{center}
\begin{tikzpicture}
\foreach \x in {1,2, ..., 9}{\draw[fill] (\x,0) circle (0.5 mm) node[below] {$\x$};};
\draw[fill] (2,0) circle (0.8 mm);
\draw[thick] (1,0) to [bend left=45] (5,0);
\draw[thick] (3,0) to [bend left=45] (8,0);
\draw[thick] (4,0) to [bend left=45] (6,0);
\draw[thick] (7,0) to [bend left=45] (9,0);
\end{tikzpicture}
\end{center}
\caption{A maximal matching on the set $\{1,2, \ldots, 9\}$.
Note that the number of crossings is $3$ and the isolated vertex is at $2$.
Hence the sign of this matching is $(-1)^{2-1} \cdot (-1)^{3} = +1$.}
\label{figure_example_of_a_matching}
\end{figure}

For a pseudo-root system of type $A_{n}$ or of type $D_{n}$,
each 2-structure's index set is a set of maximal matchings
on a set $\{1,2, \ldots, r\}$.
Let the matching $M$ be given by
\begin{align}
M
& =
\{(i_{1}<j_{1}), (i_{2}<j_{2}), \ldots, (i_{m}<j_{m})\} ,
\label{equation_matching}
\end{align}
where $m = \lfloor r/2 \rfloor$.
Furthermore, the sign of the matching $M$ is given by the following rule;
see~\cite[Section~6]{Ehrenborg_Morel_Readdy}.
A {\em crossing} of a matching is two pairs of edges
$\{a<b\}, \{c<d\} \in M$ such that $a < c < b < d$.
Let $\cross(M)$ denote the number of crossings of the matching.
If $r$ is odd, there is an isolated vertex, say $p$.
Then the sign of $M$ is given by
\begin{align*}
(-1)^{M}
& =
\begin{cases}
(-1)^{\cross(M)} & \text{ if $r$ is even,} \\
(-1)^{\cross(M)+p-1} & \text{ if $r$ is odd.}
\end{cases}
\end{align*}
See Figure~\ref{figure_example_of_a_matching}
for a matching on $\{1,2, \ldots, 9\}$
with $3$ crossings and $p=2$ as the isolated vertex.

\begin{itemize}
\item[$(A_{n})$]
The $2$-structures of a pseudo-root system of type $A_{n}$
have the type $A_{1}^{m}$ where $m = \lfloor(n+1)/2\rfloor$.
Here the matchings are on the set $\{1, 2, \ldots, n+1\}$,
that is, $r = n+1$.
The $2$-structure $\varphi$ indexed by the matching $M$ in
equation~\eqref{equation_matching}
is described by
\begin{align*}
\varphi
& =
\{\pm(e_{i_{1}}-e_{j_{1}})/\sqrt{2}, \pm(e_{i_{2}}-e_{j_{2}})/\sqrt{2}, \ldots, \pm(e_{i_{m}}-e_{j_{m}})/\sqrt{2}\} .
\end{align*}
Note that $\varphi$ has type $A_{1}^{m}$.

\item[$(D_{n})$]
The $2$-structures of a pseudo-root system of type $D_{n}$
have the type $A_{1}^{2m}$ where $m = \lfloor n/2\rfloor$.
Let $M$ be a matching on the vertex set $\{1,2, \ldots, n\}$, that is $r = n$,
given by equation~\eqref{equation_matching}.
Then the $2$-structure $\varphi$ indexed by the matching $M$ is
\begin{align*}
\varphi
& =
\{(\pm e_{i_{1}} \pm e_{j_{1}})/\sqrt{2}, (\pm e_{i_{2}} \pm e_{j_{2}})/\sqrt{2},
\ldots, (\pm e_{i_{m}} \pm e_{j_{m}})/\sqrt{2}\} .
\end{align*}
Here the $2$-structure $\varphi$ has type $A_{1}^{2m}$.
\end{itemize}
In both type $A_{n}$ and type $D_{n}$
the sign $\epsilon(\varphi)$ of the $2$-structure $\varphi$
is given by the sign of the matching~$M$, that is, $\epsilon(\varphi) = (-1)^{M}$.
This fact is straightforward to conclude using
Lemma~\ref{lemma_sign_for_2-structures}.

\section{Multivariate Taylor expansion of the pizza quantity}

We begin to consider the pizza quantity of an arrangement
of type $A_{1}^{k}$ in an $n$-dimensional space.
Let $\beta_{m}$ be the volume of the $m$-dimensional unit ball,
that is,
\begin{align*}
\beta_{m}
& =
\frac{\pi^{m/2}}{\Gamma(m/2 + 1)} .
\end{align*}

\begin{lemma}
Let $V$ be an $n$-dimensional space and
let $\Hf$ be the Coxeter arrangement of type $A_{1}^{k}$.
That is, $\Hf = \{H_{f}\}_{f \in E}$
where $E = \{f_{1}, \ldots, f_{k}\}$ is a set of $k$ orthogonal unit vectors.
The base chamber $T_{0}$ is the cone generated by the set $E$ and all vectors
orthogonal to the set~$E$.
Let $a$ be a point in $V$ such that $0 \in \Ball(a,1)$.
Then the pizza quantity is given by the $k$-dimensional integral
\begin{align*}
P(\Hf, \Ball(a, 1))
& =
2^{k} \cdot
\beta_{n-k} \cdot
\int_{0}^{(f_{1},a)}
\cdots
\int_{0}^{(f_{k},a)}
\left(1 - t_{1}^{2} - \cdots - t_{k}^{2}\right)^{(n-k)/2} \: dt_{1} \cdots dt_{k} .
\end{align*}
If $n$ and $k$ have the same parity,
then 
the pizza quantity is a polynomial in $a$ of degree at most~$n$.
\end{lemma}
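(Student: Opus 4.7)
The plan is to reduce $P(\Hf,\Ball(a,1))$ to a $k$-dimensional integral by integrating out the orthogonal complement of $\Span(E)$, and then collapse that integral to a box using an elementary antisymmetrization identity applied iteratively in each of the $k$ distinguished coordinates.

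First, I would reformulate the pizza quantity as a single integral against a sign factor. Since $E$ is orthonormal, each chamber has the form $T_{\epsilon} = \{x : \operatorname{sign}((f_{i},x)) = \epsilon_{i}\}$ for some $\epsilon \in \{\pm 1\}^{k}$, and the hyperplane $H_{f_{i}}$ separates $T_{\epsilon}$ from $T_{0}$ precisely when $\epsilon_{i} = -1$, so $(-1)^{T_{\epsilon}} = \prod_{i} \epsilon_{i}$. Consequently
$$
P(\Hf,\Ball(a,1)) \;=\; \int_{\Ball(a,1)} \prod_{i=1}^{k} \operatorname{sign}((f_{i},x)) \, dx .
$$
Writing $x = \sum_{i} t_{i} f_{i} + y$ and $a = \sum_{i} a_{i} f_{i} + a_{\perp}$ with $a_{i} = (f_{i},a)$, the ball condition becomes $\sum_{i}(t_{i}-a_{i})^{2} + \|y - a_{\perp}\|^{2} \leq 1$. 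For each fixed $t$, the fiber in $E^{\perp}$ is an $(n-k)$-ball of squared radius $1 - \sum_{i}(t_{i}-a_{i})^{2}$, so a Fubini pass contributes the factor $\beta_{n-k} \cdot (1-\sum_{i}(t_{i}-a_{i})^{2})_{+}^{(n-k)/2}$, yielding
$$
P(\Hf,\Ball(a,1)) \;=\; \beta_{n-k}\int_{\Rrr^{k}} \Bigl(1 - \sum_{i}(t_{i}-a_{i})^{2}\Bigr)_{+}^{(n-k)/2} \prod_{i=1}^{k}\operatorname{sign}(t_{i}) \, dt .
$$

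Next I would shift $s_{i} = t_{i} - a_{i}$, so the integrand becomes $F(s)\prod_{i}\operatorname{sign}(s_{i}+a_{i})$ where $F(s) = (1 - \|s\|^{2})_{+}^{(n-k)/2}$ is even in each variable. The engine is the one-dimensional identity
$$
\int_{\Rrr} F(s)\, \operatorname{sign}(s+a)\, ds \;=\; 2\int_{0}^{a} F(s)\, ds
$$
valid for any even integrable $F$ and any real $a$ (with the convention $\int_{0}^{a} = -\int_{a}^{0}$ when $a<0$); one proves it by pairing $s$ with $-s$ and using $\operatorname{sign}(s+a) + \operatorname{sign}(a-s) = 2\operatorname{sign}(a)\, \mathbf{1}_{|s|<|a|}$. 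Since integrating $F$ in $s_{1}$ from $0$ to $a_{1}$ leaves a function of $s_{2},\ldots,s_{k}$ that is still even in each of those variables, the identity can be applied again in $s_{2}$, and so on; iterating in all $k$ coordinates produces the claimed formula.

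Finally, the hypothesis $0 \in \Ball(a,1)$ gives $\sum_{i} a_{i}^{2} \leq \|a\|^{2} \leq 1$, so on the domain $\prod_{i}[0,a_{i}]$ one has $\sum_{i} s_{i}^{2} \leq 1$ and the truncation $(\,\cdot\,)_{+}$ drops out harmlessly. When $n$ and $k$ share parity, $(n-k)/2$ is a non-negative integer, so the integrand is an honest polynomial of degree $n-k$ in $s$; the $k$ integrations from $0$ to $a_{i}$ raise the degree by $k$ in the $a_{i}$, which are themselves linear functions of $a$, giving a polynomial in $a$ of degree at most $n$. The main point I expect to have to check carefully is that the one-variable antisymmetrization is genuinely sign-insensitive in $a_{i}$, so that iteration produces a single clean expression in the $a_{i}$ rather than a case analysis on their signs.
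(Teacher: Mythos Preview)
Your proof is correct. Both your argument and the paper's exploit the same underlying symmetry---reflection in the affine hyperplanes $\{x:(f_i,x)=(f_i,a)\}$ through the center $a$---but they package it differently. The paper first assumes $a\in T_0$, refines the $2^k$ chambers into $3^k$ pieces by adding the extra cuts $(f_i,x)=2(f_i,a)$, and then runs a sign-reversing involution pairing the piece with $i$th label $+1$ against the one with label $-1$; only the central box $P(0,\ldots,0)$ survives, and its volume is computed by cross-sections. You instead write the pizza quantity as a single integral against $\prod_i \operatorname{sign}((f_i,x))$, reduce dimension by Fubini, and then collapse the sign factor coordinate by coordinate via the even-function identity $\int_{\Rrr}F(s)\operatorname{sign}(s+a)\,ds=2\int_0^a F(s)\,ds$. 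Your route handles all signs of the $(f_i,a)$ at once and sidesteps the paper's aside about ``signed volume'' when $a\notin T_0$; the paper's route is more pictorial and makes the cancellation visible at the level of sets rather than integrals. Either way one arrives at the same box integral, and your parity observation for the polynomial conclusion matches the paper's.
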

\begin{proof}
Assume that the point $a$ lies in the base chamber $T_{0}$, that is, $(f_{i},a) \geq 0$ for all $i$.
The arrangement $\Hf$ consists of the $k$ hyperplanes
$H_{i} = \{x \in V : (f_{i},x) = 0\}$, $1 \leq i \leq k$,
and cuts the ball $\mathbb{B}(a,1)$ into $2^{k}$ pieces.
Furthermore, cut the ball with the affine hyperplanes
$\{x \in V : (f_{i},x) = 2 \cdot (f_{i},a)\}$,
yielding $3^{k}$ pieces.
We index these pieces by a sign vector $(c_{1}, \ldots, c_{k}) \in \{0, \pm 1\}^{k}$.
Let
\begin{align*}
H^{+1}_{i} & = \{x \in V : 2 \cdot (f_{i},a) \leq (f_{i},x)\} , \\
H^{0}_{i} & = \{x \in V : 0 \leq (f_{i},x) \leq 2 \cdot (f_{i},a)\} , \\
H^{-1}_{i} & = \{x \in V : (f_{i},x) \leq 0\} ,
\end{align*}
and finally let
$P(c_{1}, \ldots, c_{k}) = \mathbb{B}(a,1) \cap H^{c_{1}}_{1} \cap \cdots \cap H^{c_{k}}_{k}$.
Note that the two pieces
\begin{align*} 
P(0, \ldots, 0, +1, c_{i+1}, \ldots, c_{k})
\text{ and }
P(0, \ldots, 0, -1, c_{i+1}, \ldots, c_{k})
\end{align*}
are reflections of each other in the affine hyperplane $\{x \in V : (f_{i},x) = (f_{i},a)\}$
and hence they have the same volume.
But their volumes cancel in the pizza quantity.
By this sign-reversing involution, the only remaining piece is
$P(0, \ldots, 0)$,
hence
$P(\Hf, \Ball(a, 1)) = \Vol(P(0, \ldots, 0))$.
Pick $t_{1}$ through $t_{k}$ such that $-(f_{i},a) \leq t_{i} \leq (f_{i},a)$.
These values describe a point
\begin{align*}
x
& =
a
+
t_{1} \cdot f_{1}
+ \cdots +
t_{k} \cdot f_{k} ,
\end{align*}
that lies in the intersection $P(0,\ldots,0) \cap (\Span(E) + a)$.
The cross section 
of $P(0, \ldots, 0)$
parallel to~$E^{\perp}$ containing the point~$x$
is an $(n-k)$-dimensional ball of radius
$\sqrt{1 - t_{1}^{2} - \cdots - t_{k}^{2}}$.
Hence we conclude
\begin{align*}
P(\Hf, \Ball(a, 1))
& =
\beta_{n-k} \cdot
\int_{-(f_{1},a)}^{(f_{1},a)}
\cdots
\int_{-(f_{k},a)}^{(f_{k},a)}
\left(1 - t_{1}^{2} - \cdots - t_{k}^{2}\right)^{(n-k)/2} \: dt_{1} \cdots dt_{k} \\
& =
2^{k} \cdot \beta_{n-k} \cdot
\int_{0}^{(f_{1},a)}
\cdots
\int_{0}^{(f_{k},a)}
\left(1 - t_{1}^{2} - \cdots - t_{k}^{2}\right)^{(n-k)/2} \: dt_{1} \cdots dt_{k} .
\end{align*}
Lifting the restriction that the point $a$ lies in the base chamber $T_{0}$,
the above argument holds with volume replaced with signed volume.

Note that the integrand is a polynomial when $n-k$ is even.
In this case, the pizza quantity is a polynomial in $a$ of degree at most
$2 \cdot (n-k)/2 + k = n$.
\end{proof}

We now briefly consider the case
where the $2$-structures of the Coxeter arrangement have type~$A_{1}^{k}$
and $k$ has the same parity as the dimension $n$ of the space $V$.
\begin{proposition}
Let $V$ be an $n$-dimensional space. Let $\Phi$ be a pseudo-root system in $V$
of type different than $A_{1}^{n}$.
Let $a$ be a point such that $\|a\| \leq 1$.
Assume that the type of the $2$-structures of~$\Phi$ are $A_{1}^{k}$.
If $n$ and $k$ have the same parity then the pizza quantity
$P(\Hf_{\Phi}, \Ball(a, 1))$ vanishes.
\label{proposition_n_k_same_parity}
\end{proposition}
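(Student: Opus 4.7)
The plan is to combine Theorem~\ref{theorem_2-structure} with the preceding lemma and the classical fact that $W$-skew-symmetric polynomials factor through the Jacobian. First, expand via the $2$-structure decomposition
\begin{align*}
P(\Hf_{\Phi}, \Ball(a,1))
& =
\sum_{\varphi \in \Tt(\Phi)} \epsilon(\varphi) \cdot P(\Hf_{\varphi}, \Ball(a,1)) .
\end{align*}
Each $\varphi$ has type $A_{1}^{k}$ with $k$ and $n$ of the same parity, so the preceding lemma shows each summand equals a polynomial in $a$ of degree at most $n$ on the region $\|a\| \leq 1$. Hence the linear combination is a polynomial $p(a)$ on all of $V$ of degree at most $n$, agreeing with $P(\Hf_{\Phi}, \Ball(a,1))$ whenever $\|a\| \leq 1$.

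The next step is to verify that $p$ is $W$-skew-symmetric. For any $w \in W$, the bijection $T \mapsto w(T)$ on chambers satisfies $(-1)^{w(T)} = (-1)^{w} \cdot (-1)^{T}$, and the orthogonality of $w$ preserves $\Vol(\Ball(a,1) \cap T) = \Vol(\Ball(w(a),1) \cap w(T))$. Substituting into the definition of the pizza quantity yields $p(w(a)) = (-1)^{w} \cdot p(a)$ first on the $W$-invariant set $\|a\| \leq 1$ and then everywhere by polynomial extension. Invoking the factorization recalled after the definition of the Jacobian, I would write $p(a) = J(a) \cdot q(a)$ for some $W$-invariant polynomial $q(a)$.

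Finally, I would compare degrees. The Jacobian satisfies $\deg J = |\Phi^{+}|$, the number of hyperplanes in $\Hf_{\Phi}$. Decomposing $\Phi$ into irreducible components of ranks $r_{i}$ summing to $n$, each component contributes at least $r_{i}$ positive pseudo-roots, with equality only in type $A_{1}$; so $|\Phi^{+}| \geq n$, with equality if and only if $\Phi$ has type $A_{1}^{n}$. The excluded case yields $\deg J > n \geq \deg p$, which forces $q = 0$, hence $p \equiv 0$, and the pizza quantity vanishes on $\|a\| \leq 1$.

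The only real obstacle is bookkeeping: the three ingredients (polynomiality from the lemma, $W$-skew-symmetry from the action on chambers, and the lower bound $|\Phi^{+}| > n$ from the classification of irreducible pseudo-root systems) combine cleanly via a degree comparison. The parity hypothesis is exactly what is needed to make the exponent $(n-k)/2$ appearing in the lemma an integer, ensuring the integrand is a polynomial and delivering the crucial degree bound.
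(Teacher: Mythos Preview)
Your proof is correct and follows essentially the same route as the paper: decompose via $2$-structures, invoke the lemma to get a polynomial of degree at most $n$, and then kill it by comparing against the $|\Phi^{+}| > n$ hyperplanes. The only difference is cosmetic: the paper simply says ``the zeros of this polynomial include the hyperplanes of the arrangement $\Hf$, and there are more than $n$ hyperplanes,'' whereas you unpack this by explicitly establishing $W$-skew-symmetry from the chamber action and then factoring through the Jacobian, and you also justify $|\Phi^{+}| > n$ via the irreducible decomposition rather than leaving it implicit.
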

\begin{proof}
Let the valuation $v$ be given by $v(S) = \Vol(\Ball(a, 1) \cap S)$.
By Theorem~\ref{theorem_2-structure}
we express the pizza quantity as
the sum
\begin{align*}
P(\Hf, \Ball(a, 1))
& =
\sum_{\varphi \in \Tt(\Phi)} \epsilon(\varphi) \cdot P(\Hf_{\varphi}, \Ball(a, 1)) ,
\end{align*}
where $\varphi$ ranges over all $2$-structures of the 
pseudo-root system~$\Phi$.
Since $n$ and $k$ have the same parity,
each summand is a polynomial of degree at most $n$;
hence the pizza quantity is a polynomial of degree at most $n$
in the variable $a$.
Note that the zeros of this polynomial include the
hyperplanes of the arrangement $\Hf$, and there
are more than $n$ hyperplanes. Hence the polynomial is the zero polynomial.
\end{proof}

Note that this result is not as strong as Theorem~1.1
in~\cite{EMR_pizza}.
However, this result lets us concentrate on the case when
the parities of $n$ and $k$ differ.
Through the remainder of the paper we have this assumption.

Let $c_{m}$ be the coefficient
$(-1)^{m} \cdot \binom{(n-k)/2}{m}$
where the binomial coefficient is given by
$\binom{\alpha}{m} = \alpha \cdot (\alpha-1) \cdots (\alpha-m+1)/m!$.
Furthermore
let $T_{2m+k}$ be the homogenous polynomial
\begin{align*}
T_{2m+k}(x_{1}, \ldots, x_{k})
& = 
c_{m} \cdot
\sum_{r_{1} + \cdots + r_{k} = m}
\binom{m}{r_{1}, \ldots, r_{k}}
\cdot
\frac{x_{1}^{2r_{1}+1}}{2r_{1}+1} \cdots \frac{x_{k}^{2r_{k}+1}}{2r_{k}+1} .
\end{align*}
Furthermore, for $d < k$ let $T_{d} = 0$ and
for $d$ of opposite parity than $k$ also let $T_{d} = 0$.
Note that the polynomial $T_{d}$ is symmetric in the variables $x_{1}$ through $x_{k}$.
Moreover, $T_{d}$ is an odd function in each variable.

\begin{lemma}
Given the arrangement $\Hf = (H_{e_{i}})_{1 \leq i \leq k}$ of type $A_{1}^{k}$
and a point $x = (x_{1}, \ldots, x_{n})$ such that $\|x\| \leq 1$.
Then the pizza quantity $P(\Hf, \Ball(x, 1))$
has the multivariate Taylor series expansion
\begin{align*}
P(\Hf, \Ball(x, 1))
& =
2^{k} \cdot \beta_{n-k} \cdot \sum_{d \geq k} T_{d}(x_{1}, \ldots, x_{k})
\end{align*}
and this series converges for all $\|x\| \leq 1$.
\end{lemma}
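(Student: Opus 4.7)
The plan is to start from the $k$-dimensional integral representation
\begin{equation*}
P(\Hf, \Ball(x,1))
=
2^{k} \cdot \beta_{n-k} \cdot \int_{0}^{x_{1}} \cdots \int_{0}^{x_{k}} \bigl(1 - t_{1}^{2} - \cdots - t_{k}^{2}\bigr)^{(n-k)/2} \, dt_{1} \cdots dt_{k}
\end{equation*}
established in the previous lemma, expand the integrand as a multivariate power series in the integration variables, and integrate term by term.

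Carrying this out, I would first apply the generalized binomial theorem to write $(1-u)^{(n-k)/2} = \sum_{m \geq 0} c_{m}\, u^{m}$ with $u = t_{1}^{2} + \cdots + t_{k}^{2}$, and then expand each $u^{m}$ via the multinomial theorem. The resulting monomial $t_{1}^{2r_{1}} \cdots t_{k}^{2r_{k}}$ integrates coordinate by coordinate to $\prod_{i} x_{i}^{2r_{i}+1}/(2r_{i}+1)$, and collecting the contributions of fixed $m$ reassembles into exactly the polynomial $T_{2m+k}(x_{1}, \ldots, x_{k})$. Reindexing by $d = 2m+k$ and invoking the conventions $T_{d} = 0$ for $d < k$ or $d \not\equiv k \pmod{2}$ produces the claimed series.

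The main obstacle is justifying the exchange of summation and integration up to, and including, the boundary $\|x\| = 1$. Throughout the domain of integration we have $|t_{i}| \leq |x_{i}|$, so
\begin{equation*}
0 \leq t_{1}^{2} + \cdots + t_{k}^{2} \leq x_{1}^{2} + \cdots + x_{k}^{2} \leq \|x\|^{2} \leq 1,
\end{equation*}
and the binomial series is only ever evaluated on the closed interval $[0,1]$. Since $\Hf$ consists of $k$ mutually orthogonal unit vectors in an $n$-dimensional space, the parameter $\alpha := (n-k)/2$ is nonnegative: when $\alpha \in \Nnn$ the expansion terminates and convergence is trivial, while when $\alpha$ is a positive half-integer the classical asymptotic $|\binom{\alpha}{m}| = O(m^{-\alpha-1})$ gives absolute convergence of $\sum_{m} \binom{\alpha}{m} u^{m}$, uniformly in $u \in [0,1]$, by the Weierstrass $M$-test. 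Uniform convergence on the compact integration region justifies the term-by-term integration, and the same absolute bound, combined with $|t_{i}| \leq 1$ throughout the region, shows that the resulting Taylor series converges for every $\|x\| \leq 1$.
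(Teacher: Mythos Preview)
Your proof is correct and follows essentially the same route as the paper: expand the integrand via the generalized binomial series, apply the multinomial theorem, and integrate term by term. The paper's own proof is terser about the convergence justification, simply asserting that the binomial series for $(1-x)^{(n-k)/2}$ converges on $[-1,1]$ and carrying the integration through; your explicit use of the asymptotic $|\binom{\alpha}{m}| = O(m^{-\alpha-1})$ and the Weierstrass $M$-test to justify the interchange, including at the boundary $\|x\|=1$, is a welcome addition of rigor but not a different argument.
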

\begin{proof}
Note that Taylor's theorem implies
\begin{align*}
(1-x)^{(n-k)/2} 
& =
\sum_{m \geq 0} c_{m} \cdot x^{m} ,
\end{align*}
which converges for $-1 \leq x \leq 1$.
Letting $x$ be the sum
$t_{1}^{2} + \cdots + t_{k}^{2}$ 
in this series yields
\begin{align*}
(1 - t_{1}^{2} - \cdots - t_{k}^{2})^{(n-k)/2}
& =
\sum_{m \geq 0}  c_{m} \cdot (t_{1}^{2} + \cdots + t_{k}^{2})^{m}  \\
& =
\sum_{m \geq 0}
c_{m} \cdot
\sum_{r_{1} + \cdots + r_{k} = m}
\binom{m}{r_{1}, \ldots, r_{k}}
\cdot
t_{1}^{2r_{1}} \cdots t_{k}^{2r_{k}} .
\end{align*}
Note that this power series converges for
$t_{1}^{2} + \cdots + t_{k}^{2} \leq 1$.
Now the result follows by integrating
each variable $t_{i}$ from $0$ to $x_{i}$.
Furthermore, it converges when $x_{1}^{2} + \cdots + x_{k}^{2} \leq 1$
which contains the ball $\|x\| \leq 1$.
\end{proof}

For $\psi$ a set of $k$ orthogonal unit vectors in the space $V$,
that is, $\psi = \{f_{1}, \ldots, f_{k}\}$, define
$T_{d}(\psi;a)$ to be the polynomial
\begin{align*}
T_{d}(\psi;a)
& =
T_{d}((f_{1},a), \ldots, (f_{k},a)) .
\end{align*}
Note that $T_{d}(\psi;a)$ is a polynomial in $a$ of degree $d$.
Furthermore, the polynomial is skew-symmetric for $s_{\alpha}$ where
$\alpha \in \psi$, that is,
$T_{d}(s_{\alpha}(\psi);a) = - T_{d}(\psi;a) = T_{d}(\psi;s_{\alpha}(a))$.
Lastly, it also satisfies
$T_{d}(w(\psi);w(a)) =  T_{d}(\psi;a)$ for all $w \in W$.

For a root system $\Phi$ whose $2$-structures have the type $A_{1}^{k}$,
define $Z_{d}(\Phi; a)$ to be the polynomial
\begin{align*}
Z_{d}(\Phi; a)
& =
\sum_{\varphi \in \Tt(\Phi)} \epsilon(\varphi) \cdot T_{d}(\varphi^{+}; a) .
\end{align*}

\begin{lemma}
The polynomial
$Z_{d}(\Phi; a)$
is skew-symmetric in the variable $a$ with respect to the action of the Coxeter group~$W$,
that is, the following identity holds
\begin{align*}
Z_{d}(\Phi; w(a))
& =
(-1)^{w} \cdot
Z_{d}(\Phi; a)
\end{align*}
for all $w \in W$.
Hence the Jacobian $J(a)$ divides the polynomial $Z_{d}(\Phi; a)$.
\label{lemma_Z_d_skew}
\end{lemma}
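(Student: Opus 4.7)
The plan is to reduce the skew-symmetry identity $Z_d(\Phi; w(a)) = (-1)^w \cdot Z_d(\Phi; a)$ to Lemma~\ref{lemma_sign_for_2-structures}(a) by transferring the action of $w$ from the point $a$ onto the $2$-structure index $\varphi$, and then tracking the discrepancy between the positive system $w(\varphi')^{+}$ of the translated $2$-structure and the image $w(\varphi'^{+})$ of the original positive system.

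First I would use the invariance $T_d(w(\psi); w(a)) = T_d(\psi; a)$ to rewrite $T_d(\varphi^{+}; w(a)) = T_d(w^{-1}(\varphi^{+}); a)$, and then reindex the sum by substituting $\varphi = w(\varphi')$, which is a bijection on $\Tt(\Phi)$. For each root $\beta \in \varphi'^{+}$, set $\delta_\beta = +1$ if $w(\beta) \in \Phi^{+}$ and $\delta_\beta = -1$ otherwise. Then $w(\varphi')^{+} = \{\delta_\beta \cdot w(\beta) : \beta \in \varphi'^{+}\}$, so pulling back gives $w^{-1}(w(\varphi')^{+}) = \{\delta_\beta \cdot \beta : \beta \in \varphi'^{+}\}$, which differs from $\varphi'^{+}$ only by sign flips at the roots with $\delta_\beta = -1$. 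The skew-symmetry of $T_d$ in its vector-set argument (each individual flip $\beta \mapsto -\beta$ contributing a $-1$) then produces a factor $\prod_\beta \delta_\beta$.

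The heart of the proof is showing $\epsilon(w(\varphi')) = (-1)^w \cdot \bigl(\prod_\beta \delta_\beta\bigr) \cdot \epsilon(\varphi')$, which is the required generalization of Lemma~\ref{lemma_sign_for_2-structures}(a) to elements $w$ that need not preserve positivity on $\varphi'^{+}$. I would correct $w$ by introducing $\sigma = \prod_{\beta \in \varphi'^{+},\, \delta_\beta = -1} s_\beta$. Because the $2$-structures of $\Phi$ have type $A_1^{k}$, the roots in $\varphi'^{+}$ are pairwise orthogonal and the corresponding reflections commute, so $\sigma$ fixes $\varphi'$ setwise and a direct calculation shows $(w\sigma)(\varphi'^{+}) \subseteq \Phi^{+}$. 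Applying Lemma~\ref{lemma_sign_for_2-structures}(a) to $w\sigma$ then yields $\epsilon(w(\varphi')) = \epsilon((w\sigma)(\varphi')) = (-1)^{w\sigma} \cdot \epsilon(\varphi')$, and since $(-1)^\sigma = \prod_\beta \delta_\beta$ with $\delta_\beta^{2} = 1$, everything collapses to the desired sign. Summing over $\varphi'$ completes the skew-symmetry.

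The divisibility by $J(a)$ is then standard: skew-symmetry under each reflection $s_\alpha$ forces $Z_d(\Phi; a)$ to vanish on the hyperplane $H_\alpha$, so each linear form $(\alpha, a)$ with $\alpha \in \Phi^{+}$ divides $Z_d(\Phi; a)$; since these factors are pairwise coprime, their product $J(a)$ divides $Z_d(\Phi; a)$. The main obstacle I anticipate is the verification that $(w\sigma)(\varphi'^{+}) \subseteq \Phi^{+}$ when $\sigma$ is chosen as above; this is exactly where the $A_1^{k}$ hypothesis enters, since in other $2$-structure types (such as $B_2$ or $I_2(2^k)$) the interaction of reflections within a single component would require more delicate bookkeeping.
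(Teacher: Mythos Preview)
Your proposal is correct and follows essentially the same route as the paper's proof: reindex the sum over $2$-structures by the $W$-action, introduce a correction element that is a product of reflections in roots of the $2$-structure to restore positivity, and invoke Lemma~\ref{lemma_sign_for_2-structures}(a). The only cosmetic difference is that you place the correction on the right (your $\sigma$, built from reflections $s_{\beta}$ with $\beta \in \varphi'^{+}$, so that $w\sigma$ sends $\varphi'^{+}$ into $\Phi^{+}$), whereas the paper places it on the left (an element $v$, built from reflections $s_{\alpha}$ with $\alpha \in w(\varphi^{+}) \cap \Phi^{-}$, so that $v$ sends $w(\varphi^{+})$ to $(w(\varphi))^{+}$); these are conjugate by $w$ and the two computations are line-by-line equivalent.
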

\begin{proof}
Since the Coxeter group $W$ acts on the set of $2$-structures $\Tt(\Phi)$
we can rewrite the sum as
\begin{align*}
Z_{d}(\Phi; w(a))
& =
\sum_{w(\varphi) \in \Tt(\Phi)} \epsilon(w(\varphi)) \cdot T_{d}((w(\varphi))^{+}; w(a)) .
\end{align*}
Note that $(w(\varphi))^{+} = w(\varphi) \cap \Phi^{+}$ consists of positive pseudo-roots.
However there exists an element~$v$ in the Coxeter group $W$
of the $2$-system $w(\varphi)$ such that
$v(w(\varphi^{+})) = (w(\varphi))^{+}$.
That is, $v$ is the commutative product of the reflections $s_{\alpha}$
where the pseudo-roots $\alpha$ are the negative roots in $w(\varphi^{+})$,
that is, $\alpha \in w(\varphi^{+}) \cap \Phi^{-} = w(\varphi^{+}) \cap (\Phi - \Phi^{+})$.
Using that the polynomial $T_{d}$ is skew-symmetric, we have that
$T_{d}(v(w(\varphi^{+})); w(a)) = (-1)^{v} \cdot T_{d}(w(\varphi^{+}); w(a))$.
Hence the above sum is given by
\begin{align*}
\sum_{w(\varphi) \in \Tt(\Phi)} \epsilon(w(\varphi)) \cdot T_{d}(v(w(\varphi^{+})); w(a))
& =
\sum_{w(\varphi) \in \Tt(\Phi)} \epsilon(w(\varphi)) \cdot (-1)^{v} \cdot T_{d}(w(\varphi^{+}); w(a)) \\
& =
\sum_{w(\varphi) \in \Tt(\Phi)} \epsilon(w(\varphi)) \cdot (-1)^{v} \cdot T_{d}(\varphi^{+}; a) .
\end{align*}
But note that $w(\varphi) = v(w(\varphi))$.
Since 
$v(w(\varphi^{+})) \subseteq \Phi^{+}$, Lemma~\ref{lemma_sign_for_2-structures} part~(a)
implies that the signs satisfy
$\epsilon(w(\varphi)) = \epsilon(v(w(\varphi))) = (-1)^{vw} \cdot \epsilon(\varphi) 
= (-1)^{w} \cdot \epsilon(\varphi) \cdot (-1)^{v}$.
Hence the sum is given by
\begin{align*}
(-1)^{w} \cdot \sum_{w(\varphi) \in \Tt(\Phi)} \epsilon(\varphi) \cdot T_{d}(\varphi^{+}; a) .
\end{align*}
Finally by replacing $w(\varphi) \in \Tt(\Phi)$ with $\varphi \in \Tt(\Phi)$ the result follows.
\end{proof}

\begin{theorem}
Let $\Hf$ be a Coxeter arrangement in a space $V$ of dimension $n$
with positive pseudo-roots $\Phi^{+}$.
Assume furthermore that the type of $2$-structures of $\Hf$ is $A_{1}^{k}$
where $n$ and $k$ have opposite parity.
If the ball $\Ball(a,1)$ contains the origin, the following multivariate Taylor expansion holds
\begin{align*}
P(\Hf, \Ball(a, 1))
& =
2^{k} \cdot \beta_{n-k} \cdot \sum_{d \geq |\Phi^{+}|} Z_{d}(\Phi; a) ,
\end{align*}
that is, there are no terms of degree less than $|\Phi^{+}|$.
\label{theorem_Taylor_expansion}
\end{theorem}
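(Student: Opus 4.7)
The plan is to combine Theorem~\ref{theorem_2-structure} with the Taylor expansion for arrangements of type $A_{1}^{k}$ (the preceding lemma) and then use the skew-symmetry established in Lemma~\ref{lemma_Z_d_skew} to eliminate all terms of degree below $|\Phi^{+}|$.

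First, I would apply Theorem~\ref{theorem_2-structure} with the valuation $v(S)=\Vol(\Ball(a,1)\cap S)$ to decompose
\[
P(\Hf,\Ball(a,1))=\sum_{\varphi\in\Tt(\Phi)}\epsilon(\varphi)\cdot P(\Hf_{\varphi},\Ball(a,1)).
\]
Since every $2$-structure $\varphi$ has type $A_{1}^{k}$, the Taylor expansion lemma for $A_{1}^{k}$ gives
\[
P(\Hf_{\varphi},\Ball(a,1))=2^{k}\cdot\beta_{n-k}\cdot\sum_{d\geq k}T_{d}(\varphi^{+};a),
\]
and this converges on the ball $\|a\|\leq 1$. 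Swapping the finite sum over $\varphi$ with the sum over $d$ and regrouping by degree produces
\[
P(\Hf,\Ball(a,1))=2^{k}\cdot\beta_{n-k}\cdot\sum_{d\geq k}Z_{d}(\Phi;a).
\]

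It remains to verify that $Z_{d}(\Phi;a)\equiv 0$ whenever $k\leq d<|\Phi^{+}|$. By Lemma~\ref{lemma_Z_d_skew}, each $Z_{d}(\Phi;a)$ is skew-symmetric under the action of the Coxeter group~$W$, so the Jacobian $J(a)$ divides $Z_{d}(\Phi;a)$. But $J$ is a product of $|\Phi^{+}|$ linear forms, hence has degree exactly $|\Phi^{+}|$, whereas $Z_{d}(\Phi;a)$ is a polynomial of degree at most $d$. For $d<|\Phi^{+}|$ the only multiple of $J$ of degree at most $d$ is the zero polynomial, and so $Z_{d}(\Phi;a)=0$. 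This leaves only the terms with $d\geq |\Phi^{+}|$, which is the claim.

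None of the steps is a serious obstacle once the preceding lemmas are in hand; the argument is essentially a bookkeeping reduction to the skew-symmetry-plus-degree argument. The one point that warrants attention is the use of the parity hypothesis: the assumption that $n$ and $k$ have opposite parity makes $(n-k)/2$ a non-integer, so the binomial series for $(1-x)^{(n-k)/2}$ is a genuine infinite expansion and the $T_{d}$ are supported on the infinite progression $d=k,k+2,k+4,\ldots$; in the same-parity case the series terminates and the conclusion follows instead from Proposition~\ref{proposition_n_k_same_parity}.
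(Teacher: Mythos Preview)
Your proof is correct and follows essentially the same approach as the paper: apply Theorem~\ref{theorem_2-structure} with the volume valuation, insert the Taylor expansion for the $A_{1}^{k}$ subarrangements, regroup by degree to obtain $\sum_{d}Z_{d}(\Phi;a)$, and then invoke Lemma~\ref{lemma_Z_d_skew} together with the degree of the Jacobian to kill the terms with $d<|\Phi^{+}|$. Your closing remark on the role of the parity hypothesis is accurate extra context, though the paper's proof does not make that point explicit.
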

\begin{proof}
Using the valuation $v(S) = \Vol(\Ball(a,1) \cap S)$,
Theorem~\ref{theorem_2-structure} implies
\begin{align*}
P(\Hf, \Ball(a, 1))
& =
\sum_{\varphi \in \Tt(\Phi)} \epsilon(\varphi) \cdot P(\Hf_{\varphi}, \Ball(a, 1)) \\
& =
2^{k} \cdot \beta_{n-k} \cdot
\sum_{\varphi \in \Tt(\Phi)} \epsilon(\varphi) \cdot \sum_{d \geq 0} T_{d}(\varphi^{+}; a) \\
& =
2^{k} \cdot \beta_{n-k} \cdot \sum_{d \geq 0}
\sum_{\varphi \in \Tt(\Phi)} \epsilon(\varphi) \cdot T_{d}(\varphi^{+}; a) \\
& =
2^{k} \cdot \beta_{n-k} \cdot \sum_{d \geq 0} Z_{d}(\Phi;a) .
\end{align*}
By Lemma~\ref{lemma_Z_d_skew}
the degree~$d$ polynomial $Z_{d}(\Phi;a)$ is divisible by the Jacobian~$J(a)$.
But the Jacobian~$J(a)$ is a polynomial of degree~$|\Phi^{+}|$.
Thus there are no terms of degree less than~$|\Phi^{+}|$ in the multivariate Taylor expansion.
\end{proof}

\begin{table}
$$
\begin{array}{| c | c | c | c | c | c |}
\hline
&&&&& \\
\text{type} &n&k& |\Phi^{+}|  & Z_{|\Phi^{+}|}(\Phi; a)/J(a) & Z_{|\Phi^{+}|+2}(\Phi; a)/J(a)
 \\[4mm] \hline \hline
&&&&& \\
A_{2} &2&1& 3 &
\displaystyle \frac{1}{2}  
& \displaystyle \frac{3}{2^{5}} \cdot (a_{1}^{2} + a_{2}^{2} + a_{3}^{2})
\\[4mm] \hline
&&&&& \\
A_{3} &3&2& 6 &
\displaystyle -\frac{1}{2 \cdot 3}  
& \displaystyle -\frac{1}{2 \cdot 5} \cdot (a_{1}^{2} + \cdots + a_{4}^{2})
\\[4mm] \hline
&&&&& \\
A_{6} &6&3& 21 &
\displaystyle -\frac{3 \cdot 7 \cdot 11 \cdot 13}{2^{8}}
& \displaystyle \frac{3^{2} \cdot 5^{2} \cdot 7 \cdot 11 \cdot 13}{2^{13}} 
\cdot (a_{1}^{2} + \cdots + a_{7}^{2}) \\[4mm] \hline
&&&&& \\
A_{7} &7&4& 28 &
\displaystyle \frac{3 \cdot 11 \cdot 13 \cdot 17 \cdot 19}{2^{8}} 
& \displaystyle \displaystyle \frac{7 \cdot 11 \cdot 13 \cdot 17 \cdot 19}{2^{7}} 
\cdot (a_{1}^{2} + \cdots + a_{8}^{2})
\\[4mm] \hline
\end{array}
$$
\caption{The two leading terms of the multivariate Taylor series expansion of
$2^{-k} \cdot \beta_{n-k}^{-1} \cdot P(\Hf,\mathbb{B}(a,1))$
where the arrangement $\Hf$ has types $A_{2}$, $A_{3}$, $A_{6}$ and $A_{7}$.
Note that the relation $a_{1} + \cdots + a_{n+1} = 0$ holds.
In these types $k = \lfloor (n+1)/2 \rfloor$
and the number of positive pseudo-roots is given by $|\Phi^{+}| = \binom{n+1}{2}$.}
\label{table_A}
\end{table}

\section{Computational results for the leading term}

One consequence of 
Theorem~\ref{theorem_Taylor_expansion}
is that the leading term (if non-zero) has degree $|\Phi^{+}|$.
Hence if Conjecture~\ref{conjecture_conjecture} is true,
to get an idea of the correct sign, it is enough to calculate
this degree $|\Phi^{+}|$ term.
Tables~\ref{table_A} and~\ref{table_D} show the results
of these calculations in types $A_{n}$ and $D_{n}$
up to dimension $n=7$.

The next case to compute in type~$A$ is dimension~$10$.
In this case we have $11!! = 11 \cdot 9 \cdot 7 \cdot 5 \cdot 3 \cdot 1 = 10395$
number of matchings on the set $\{1,2, \ldots, 11\}$.
Each matching is a polynomial that expands into $2^{5} = 32$ terms.
In total, the calculation involves $2^{5} \cdot 11!! = 332640$ monomials.
In type~$D$, the next case is in dimension~$9$
and the similar calculation yields $2^{4} \cdot 9!! = 15120$ monomials.

It is interesting to note that the coefficients
$Z_{|\Phi^{+}|}(\Phi; a)/J(a)$ in all these cases are
rational numbers that factor nicely into small primes,
suggesting that there is an explicit expression.

Based on this data we can now refine Conjecture~\ref{conjecture_conjecture} as follows.
\begin{conjecture}
Let $\Hf$ be a Coxeter arrangement of type $A_{n}$ or $D_{n}$
in an $n$-dimensional space~$V$.
Let $a$ be a point in $V$ such that $0 \in \Ball(a,R)$
and assume that the point $a$ lies in the interior of a chamber $T$ of the arrangement $\Hf$.
\begin{itemize}
\item[$(A_{n})$]
Assuming that $n \equiv 2$ or $3 \bmod 4$ then
the pizza quantity~$P(\Hf,\Ball(a,R))$ has its sign given by
\begin{align*}
(-1)^{\lfloor (n+1)/4 \rfloor} \cdot (-1)^{T} \cdot P(\Hf,\Ball(a,R)) > 0 .
\end{align*}
\item[$(D_{n})$]
Assuming that $n$ is odd then
the pizza quantity~$P(\Hf,\Ball(a,R))$ has its sign given by
\begin{align*}
(-1)^{T} \cdot P(\Hf,\Ball(a,R)) < 0 .
\end{align*}
\end{itemize}
\label{conjecture_A_and_D}
\end{conjecture}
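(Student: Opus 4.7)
\emph{Setup and reduction to a leading term.} The plan is to extract the first non-zero term of the multivariate Taylor series and use it to determine the sign of $P(\Hf,\Ball(a,R))$. After rescaling to $R=1$, Theorem~\ref{theorem_Taylor_expansion} gives
\begin{align*}
P(\Hf, \Ball(a, 1))
& =
2^{k} \cdot \beta_{n-k} \cdot \sum_{d \geq |\Phi^{+}|} Z_{d}(\Phi; a),
\end{align*}
and by Lemma~\ref{lemma_Z_d_skew} every $Z_{d}(\Phi;a)$ is divisible by the Jacobian $J(a)$. In particular the leading term $Z_{|\Phi^{+}|}(\Phi;a)$ has degree equal to $\deg J = |\Phi^{+}|$, so it equals $c_{\Phi} \cdot J(a)$ for a single real constant $c_{\Phi}$. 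Since $J(a)$ has sign $(-1)^{T}$ on the interior of the chamber $T$, the sign of $P(\Hf,\Ball(a,R))$ for $a$ in a small neighborhood of the origin inside $T$ is $\mathrm{sgn}(c_{\Phi})\cdot(-1)^{T}$.

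\emph{Step 1: determine the sign of $c_{\Phi}$.} The constant $c_{\Phi}$ is an explicit alternating sum over maximal matchings $M$, weighted by $(-1)^{M}$, of the coefficient of $J(a)$ in $T_{|\Phi^{+}|}(\varphi^{+};a)$. I would try to evaluate this sum in closed form. The values displayed in Tables~\ref{table_A} and~\ref{table_D} factor into very small primes, which suggests either a product formula (analogous to a hook-length or Cauchy-type identity) or a recursion on $n$, likely provable by a generating function argument over matchings decorated by crossings. The goal is to prove
\begin{align*}
\mathrm{sgn}(c_{A_{n}}) & = (-1)^{\lfloor (n+1)/4 \rfloor} \quad \text{for } n \equiv 2, 3 \bmod 4,\\
\mathrm{sgn}(c_{D_{n}}) & = -1 \quad \text{for } n \text{ odd,}
\end{align*}
which together with the reduction above establishes Conjecture~\ref{conjecture_A_and_D} for $a$ sufficiently close to the origin.

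\emph{Step 2: propagate the sign globally.} For general $a$ with $\|a\|\leq R$, consider $F(\Phi; a) = P(\Hf, \Ball(a, R))/J(a)$, which by Lemma~\ref{lemma_Z_d_skew} extends to a real-analytic $W$-invariant function on $V$. Step~1 gives $F(\Phi;0) \neq 0$ with the predicted sign, so $F$ is of fixed sign in a neighborhood of the origin. To finish the conjecture one must show that $F$ does not change sign on the closed ball $\|a\|\leq R$; equivalently $F$ is nowhere zero there, which is exactly the content of Conjecture~\ref{conjecture_conjecture} restricted to types $A_{n}$ and $D_{n}$. A natural attack is induction on the dimension: restrict $P(\Hf,\Ball(a,R))$ to a hyperplane $H_{\alpha}\in\Hf$ and relate it to the pizza quantity of the restricted Coxeter arrangement on $H_{\alpha}$ (of dimension $n-1$), using the inductive sign information to constrain $F$ on the boundary between chambers.

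\emph{Main obstacle.} Step~2 is in essence Conjecture~\ref{conjecture_conjecture}, and the multivariate Taylor expansion by itself gives only local control near the origin; the tables already show that successive Taylor coefficients $Z_{d}(\Phi;a)/J(a)$ need not share the sign of $c_{\Phi}$, so crude summation will not suffice. A genuine proof of global non-vanishing of $F$ likely requires a new integral representation of $P(\Hf,\Ball(a,R))/J(a)$ as a manifestly fixed-sign integral, in the spirit of the Mabry--Deiermann argument in the $I_{2}(m)$ case. By contrast, Step~1 looks combinatorially tractable once the right matching identity is spotted, and I would therefore expect the $c_{\Phi}$ computation to be the accessible half of the problem and the global extension in Step~2 to be the hard core.
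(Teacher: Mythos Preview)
This statement is a \emph{conjecture}; the paper does not prove it. The paper's evidence is purely computational: Tables~\ref{table_A} and~\ref{table_D} record the leading coefficients $c_\Phi = Z_{|\Phi^+|}(\Phi;a)/J(a)$ for $A_2,A_3,A_6,A_7$ and $D_3,D_5,D_7$, and the paper observes only that these numbers pin down the sign of $P(\Hf,\Ball(a,R))$ in a neighborhood of the origin, and only for $n\le 7$.

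Your two-step plan is the natural one, and the paper is organized around exactly this decomposition. But be aware that \emph{both} steps are open, not just Step~2. Your Step~1 is not settled in the paper: no closed form or sign pattern for $c_\Phi$ is proved (or even conjectured) beyond the tabulated cases; the paper remarks only that the computed values ``factor nicely into small primes, suggesting that there is an explicit expression.'' So the half you call ``accessible'' is itself an open problem.

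For type $D_n$ the paper proposes a route that sidesteps your Step~2 entirely, and which you do not mention. Conjecture~\ref{conjecture_D_by_degree} posits a sign on certain coefficients of \emph{every} homogeneous piece $Z_d(\Phi;a)$, and the Proposition immediately following it shows, via the Jacobi bialternant formula and the nonnegativity of Schur polynomials in the variables $a_1^2,\ldots,a_n^2$, that this would force $Z_d(\Phi;a)/J(a)\le 0$ for all $d$ and hence $P(\Hf,\Ball(a,R))/J(a)<0$ on the whole ball. This term-by-term mechanism is unavailable in type $A_n$: as you correctly read off from the $A_6$ row of Table~\ref{table_A}, the degree-$(|\Phi^+|+2)$ coefficient there has the opposite sign to $c_\Phi$, so any $A_n$ argument must involve genuine cancellation across degrees, and something like your Step~2 (or a Mabry--Deiermann-style integral representation) appears unavoidable.
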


Observe that the calculations in Tables~\ref{table_A} and~\ref{table_D}
imply that for dimension at most $7$
there is a neighborhood $U$ of the origin
such that the conjecture is true for $a$ in this neighborhood $U$.

\begin{table}
$$
\begin{array}{| c | c | c | c | c | c |}
\hline
&&&&& \\
\text{type} &n&k& |\Phi^{+}|  & Z_{|\Phi^{+}|}(\Phi; a)/J(a) & Z_{|\Phi^{+}|+2}(\Phi; a)/J(a)
 \\[4mm] \hline \hline
&&&&& \\
D_{3} &3&2& 6 &
\displaystyle -\frac{1}{2 \cdot 3}
& \displaystyle -\frac{1}{2 \cdot 5} \cdot (a_{1}^2 + a_{2}^{2} + a_{3}^{2})
\\[4mm] \hline
&&&&& \\
D_{5} &5&4& 20 &
\displaystyle - \frac{11 \cdot 13}{2^{3} \cdot 5}  
& \displaystyle - \frac{11 \cdot 13}{2^{2} \cdot 3}  \cdot (a_{1}^2 + \cdots + a_{5}^{2})
\\[4mm] \hline
&&&&& \\
D_{7} &7&6& 42 &
\displaystyle - \frac{11 \cdot 13 \cdot 17 \cdot 19 \cdot 23 \cdot 29 \cdot 31}{2^{4} \cdot 3 \cdot 7}
& \displaystyle - \frac{5 \cdot 11 \cdot 17 \cdot 19 \cdot 23 \cdot 29 \cdot 31}{2^{4}}
\cdot (a_{1}^2 + \cdots + a_{7}^{2})
\\[4mm] \hline
\end{array}
$$
\caption{The two leading terms of the multivariate Taylor series expansion of $2^{-n} \cdot P(\Hf,\mathbb{B}(a,1))$
where the arrangement $\Hf$ has types $D_{3}$, $D_{5}$ and $D_{7}$.
Here $k = n-1$
and $|\Phi^{+}| = n \cdot (n-1)$.
Note that the calculation for type $D_{3}$ agrees with type $A_{3}$ in Table~\ref{table_A}.}
\label{table_D}
\end{table}

\section{Type $D_{n}$ in odd dimensions}

We now consider a pseudo-root system of type $D_{n}$
where $n$ is odd and $k = n-1$ is even.
Let $s_{i}$ denote the orthogonal reflection in the
coordinate hyperplane
$\{(x_{1}, \ldots, x_{n}) \in \Rrr^{n} : x_{i} = 0\}$.
We first explore the form of the polynomial $Z_{d}(\Phi; a)$.
\begin{lemma}
The polynomial $Z_{d}(\Phi; a)$, which is homogenous of degree $d$, satisfies the following properties:
\begin{itemize}
\item[(a)]
It is invariant under the reflection $s_{i}$, that is,
$Z_{d}(\Phi; s_{i}(a)) = Z_{d}(\Phi; a)$.
In other words, $Z_{d}(\Phi; a)$ is a polynomial in the variables
$a_{1}^{2}, \ldots, a_{n}^{2}$.
\item[(b)]
The polynomial $Z_{d}(\Phi; a)$ contains no monomial
where all the exponents are positive.
\item[(c)]
The polynomial $Z_{d}(\Phi; a)$ contains no monomial
where two exponents are equal.
\end{itemize}
\label{lemma_Z_d_basic_properties}
\end{lemma}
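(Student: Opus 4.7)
The plan is to analyze how the coordinate reflection $s_i$---which does \emph{not} lie in $W(D_n)$---acts on each $2$-structure $\varphi$ and to combine this with the $W$-skew-symmetry from Lemma~\ref{lemma_Z_d_skew} restricted to the $S_n$-subgroup of coordinate permutations. The key technical point, and the main obstacle, is part~(a), since $s_i$ is not a Weyl group element and the invariance of $Z_d$ under $s_i$ must be proved directly at the level of the summands $\epsilon(\varphi) \cdot T_d(\varphi^+; a)$.

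For part~(a), I would first observe that $s_i$ preserves $\Phi$ as a set and in fact preserves each $2$-structure $\varphi$ as a set, since $\varphi$ is built from quadruples $\{\pm e_p \pm e_q\}/\sqrt{2}$ that are visibly $s_i$-invariant. Using that $T_d$ is symmetric in its arguments and odd in each, and that $s_i$ is self-adjoint so $(f, s_i a) = (s_i f, a)$, the identity $T_d(\varphi^+; s_i(a)) = T_d(s_i(\varphi^+); a)$ reduces to counting the sign changes needed to write $s_i(\varphi^+)$ as a signed permutation of $\varphi^+$. A direct case check shows that if the matched partner of $i$ is some $j > i$ then $s_i$ sends $(e_i \pm e_j)/\sqrt{2}$ to $-(e_i \mp e_j)/\sqrt{2}$, producing two sign changes; if $j < i$ then $s_i$ simply swaps the two positive pseudo-roots of the pair with no sign change; and if $i$ is the isolated vertex of the matching then $s_i$ fixes $\varphi^+$ pointwise. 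In every case the parity of sign changes is even, so $T_d(\varphi^+; s_i(a)) = T_d(\varphi^+; a)$, and summing over $\varphi$ gives $Z_d(\Phi; s_i(a)) = Z_d(\Phi; a)$. Invariance under every $s_i$ then forces $Z_d$ to be a polynomial in $a_1^2, \ldots, a_n^2$.

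For part~(b), if $M$ is the matching indexing $\varphi$, then $M$ has an isolated vertex $p$ (since $n$ is odd) and every element of $\varphi^+$ lies in the span of $\{e_j : j \neq p\}$; hence $T_d(\varphi^+; a)$ is a polynomial in $\{a_j : j \neq p\}$ and contains no monomial involving $a_p$. A monomial $a_1^{e_1} \cdots a_n^{e_n}$ with every $e_i > 0$ therefore receives no contribution from any $\varphi$ and has coefficient zero in $Z_d(\Phi; a)$. For part~(c), I combine (a) with Lemma~\ref{lemma_Z_d_skew}: every permutation $\sigma \in S_n$ of the coordinates is a Weyl element with $(-1)^{\sigma} = \mathrm{sgn}(\sigma)$, so $Z_d(\Phi; \sigma(a)) = \mathrm{sgn}(\sigma) \cdot Z_d(\Phi; a)$, and viewing $Z_d$ as a polynomial in $a_1^2, \ldots, a_n^2$ by~(a), this means $Z_d$ is alternating under $S_n$ acting on the squared variables. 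Any monomial $\prod_i (a_i^2)^{c_i}$ with $c_i = c_j$ for some $i \neq j$ would be fixed by the transposition $(i\, j)$ while its coefficient must change sign, so that coefficient vanishes, yielding~(c).
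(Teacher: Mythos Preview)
Your proof is correct and follows essentially the same approach as the paper: for~(a) you do the same three-case analysis on how $s_i$ acts on $\varphi^+$ (isolated vertex, $i$ the smaller element of its pair, $i$ the larger element), using symmetry and oddness of $T_d$; for~(b) you use the isolated vertex of the matching exactly as the paper does; and for~(c) you invoke the $S_n$-skew-symmetry from Lemma~\ref{lemma_Z_d_skew}, which is precisely the paper's reflection-in-$H_{(e_i-e_j)/\sqrt{2}}$ argument rephrased.
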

\begin{proof}
To prove part~(a) it is enough to verify that
$T_{d}(\varphi^{+}; s_{i}(a)) = T_{d}(\varphi^{+}; a)$
for a $2$-structure $\varphi$.
Let $M = \{(i_{1}<j_{1}), \ldots, (i_{m}<j_{m})\}$
be a maximal matching on the set $\{1, \ldots, n\}$.
The positive roots of the associated $2$-structure are given by
\begin{align*}
\varphi^{+}
& =
\left\{f^{+}_{1}, f^{-}_{1}, f^{+}_{2}, f^{-}_{2}, \ldots, f^{+}_{k/2}, f^{-}_{k/2}\right\} ,
\end{align*}
where $f^{+}_{r} = (e_{i_{r}} + e_{j_{r}})/\sqrt{2}$
and $f^{-}_{r} = (e_{i_{r}} - e_{j_{r}})/\sqrt{2}$.
If $i$ is the isolated vertex of matching $M$ then each vector of $\varphi^{+}$ is
invariant under the reflection $s_{i}$ and since $s_{i}$ is self-adjoint
we have
\begin{align*}
T_{d}(\varphi^{+}; s_{i}(a))
& =
T_{d}((f^{+}_{1},s_{i}(a)), \ldots, (f^{-}_{k/2},s_{i}(a)))
=
T_{d}((s_{i}(f^{+}_{1}),a), \ldots, (s_{i}(f^{-}_{k/2}),a)) \\
& =
T_{d}((f^{+}_{1},a), \ldots, (f^{-}_{k/2},a))
=
T_{d}(\varphi^{+}; a) .
\end{align*}
If $i$ is equal to $j_{r}$ for some index $r$ then
again the vectors of $\varphi^{+}$ are invariant under $s_{i}$
except for $s(f^{+}_{r}) = f^{-}_{r}$ and $s(f^{-}_{r}) = f^{+}_{r}$.
Since the polynomial $T_{d}$ is symmetric the calculation carries through.
Finally, if $i$ is equal to $i_{r}$ for some index $r$
then the only difference is that
for $s(f^{+}_{r}) = - f^{-}_{r}$ and $s(f^{-}_{r}) = - f^{+}_{r}$.
Since $T_{d}$ is odd in each variable, the two negative signs will cancel
in the calculation.

Let $p$ be the isolated vertex of a matching $M$.
Then for the associated $2$-structure $\varphi$
the expression
$T_{d}(\varphi^{+};a)$ is independent of the variable $a_{r}$.
From this observation part~(b) follows.

Part~(c) follows from Lemma~\ref{lemma_Z_d_skew}.
Since if $\cdots a_{i}^{e} \cdots a_{j}^{e} \cdots$
were such a monomial with non-zero coefficient $c$,
the reflection in the hyperplane $H_{(e_{i}-e_{j})/\sqrt{2}}$
would change the sign of this monomial.
This action contradicts that the coefficient~$c$ was non-zero.
\end{proof}

\begin{conjecture}
Let $Y$ be the polynomial
\begin{align}
Y
& =
\sum_{M} (-1)^{M} \cdot
T_{d}\bigl(\bigl\{f^{+}_{1}, f^{-}_{1}, \ldots, f^{+}_{k/2}, f^{-}_{k/2}\bigl\}, a\bigl)  ,
\label{equation_a_n_zero}
\end{align}
where $M = \{(i_{1}<j_{1}), (i_{2}<j_{2}), \ldots, (i_{k/2}<j_{k/2})\}$
ranges over all maximal matchings of the set $\{1, \ldots, n-1\}$
and $f^{\pm}_{r} = (e_{i_{r}} \pm e_{j_{r}})/\sqrt{2}$.
Consider a monomial $\mu$
where the exponents are decreasing (and hence strictly decreasing),
that is, it has the form
\begin{align*}
\mu
& =
a_{1}^{2\lambda_{1}+2(n-1)} \cdot
a_{2}^{2\lambda_{2}+2(n-2)} \cdots
a_{n-1}^{2\lambda_{n-1}+2} ,
\end{align*}
where $\lambda = (\lambda_{1}, \ldots, \lambda_{n-1})$ is a partition of $(d - n \cdot (n-1))/2$
into $n-1$ parts.
Then the coefficient of $\mu$ in the polynomial $Y$ is negative.
\label{conjecture_D_by_degree} 
\end{conjecture}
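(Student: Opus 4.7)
The plan is to recast the sign claim as the positivity of an explicit Pfaffian, and then to attack that Pfaffian via a de~Bruijn type integral identity. To set things up, introduce the formal power series $F(t,y)=\int_0^y e^{tu^2}\,du=\sum_{r\geq 0} t^r y^{2r+1}/(r!(2r+1))$. Because $F$ is odd in $y$, the matrix $A(t)$ with entries $A(t)_{ij}=F(t,a_i+a_j)\,F(t,a_i-a_j)$ is skew-symmetric in $(i,j)$. A direct rearrangement of the definition of $T_d$, using that the relevant $2$-structure roots are $(e_{i_\ell}\pm e_{j_\ell})/\sqrt{2}$, gives
\begin{align*}
T_d(\varphi_M^+;a)=\frac{c_m\,m!}{2^{d/2}}\,[t^m]\prod_{\ell}F(t,a_{i_\ell}+a_{j_\ell})\,F(t,a_{i_\ell}-a_{j_\ell})
\end{align*}
with $m=(d-n+1)/2$. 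Summing over matchings and using the Pfaffian expansion yields $Y=(c_m m!/2^{d/2})\,[t^m]\,\mathrm{Pf}(A(t))$, and since a short computation gives $c_m=-(2m-3)!!/(2^m m!)$, which is strictly negative for $m\geq 1$, the conjecture is equivalent to the positivity of the corresponding coefficient of $\mathrm{Pf}(A(t))$.

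Next I would extract the coefficient of the monomial $\mu=\prod a_i^{e_i}$. Because each $A_{ij}(t)$ depends only on $(a_i,a_j)$, this extraction decouples across the pairs of any matching, and writing $G_s(a,b)=[t^s]A(t)_{ab}=\int_0^{a+b}\int_0^{a-b}(u^2+v^2)^s/s!\,du\,dv$ and $g(e,f)=[a^e b^f]\,G_s(a,b)$ with $s=(e+f-2)/2$, one obtains
\begin{align*}
[a^\mu]\,Y=\frac{c_m\,m!}{2^{d/2}}\,\mathrm{Pf}\bigl(g(e_i,e_j)\bigr)_{i,j=1}^{n-1}.
\end{align*}
Since $G_s(a,b)=-G_s(b,a)$, the function $g$ is antisymmetric and this matrix is genuinely skew-symmetric, so the conjecture reduces to $\mathrm{Pf}(g(e_i,e_j))>0$ for every strictly decreasing sequence of positive even integers. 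The factorization $G_s=(a^2-b^2)H_s$, where $H_s=\sum_{r+r'=s}(a+b)^{2r}(a-b)^{2r'}/[r!r'!(2r+1)(2r'+1)]$ is symmetric in $a^2,b^2$, combined with a unimodality property of the bivariate coefficients of $H_s$ that is verifiable by direct expansion, gives $g(e,f)>0$ for even $e>f\geq 0$, and this already handles the case $n=3$ where the Pfaffian is a single entry.

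For the general case I would attempt to invoke de~Bruijn's identity: if one can exhibit functions $\phi_e(x)$ satisfying
\begin{align*}
g(e,f)=\iint_{x<y}\bigl(\phi_e(x)\phi_f(y)-\phi_e(y)\phi_f(x)\bigr)\,dx\,dy,
\end{align*}
then $\mathrm{Pf}(g(e_i,e_j))=\int_{x_1<\cdots<x_{n-1}}\det(\phi_{e_i}(x_j))\,dx_1\cdots dx_{n-1}$. The rescaling $u=(a+b)x$, $v=(a-b)y$ inside $G_s$ presents $g(e,f)$ as a double integral over $[0,1]^2$ with polynomial integrand, so candidates for $\phi_e$ can be sought among derivatives or contour residues of a suitable generating function in an auxiliary variable. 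The hard part will be verifying that the resulting family $\{\phi_e\}$ forms a Chebyshev system on the integration interval, which would guarantee $\det(\phi_{e_i}(x_j))>0$ on the ordered simplex and hence yield the required Pfaffian positivity. If the Chebyshev verification proves intractable, a backup is to induct on $n$ via Pfaffian cofactor expansion, combining entry-wise positivity of $g$ with the inductive hypothesis for lower-order Pfaffians, though controlling the alternating signs in that expansion is precisely the obstruction that the de~Bruijn approach is designed to bypass.
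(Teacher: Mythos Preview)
The statement you are attempting is stated in the paper as an open \emph{conjecture}; the paper does not prove it. What the paper does prove is only the implication that this conjecture would yield the $D_n$ part of the sign conjecture for the pizza quantity. So there is no ``paper's own proof'' to compare against, and any correct argument would in fact resolve an open problem.

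Your reduction to a Pfaffian positivity statement is clean and, as far as I can check, correct. The generating-function identity $T_d(\varphi_M^+;a)=(c_m\,m!/2^{d/2})\,[t^m]\prod_\ell F(t,a_{i_\ell}+a_{j_\ell})F(t,a_{i_\ell}-a_{j_\ell})$ follows from the multinomial definition of $T_d$ together with the rescaling $F(t,y/\sqrt{2})=2^{-1/2}F(t/2,y)$; the crossing sign $(-1)^M$ is exactly the Pfaffian sign on matchings of $\{1,\dots,n-1\}$; and extracting the monomial $\mu$ decouples across pairs because the exponents are even and $G_s$ is homogeneous of degree $2s+2$, forcing $s_\ell=(e_{i_\ell}+e_{j_\ell}-2)/2$ in each factor. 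The sign computation $c_m=-(2m-3)!!/(2^m m!)<0$ for $m\geq 1$ is also right (and $m\geq 1$ is guaranteed since $d\geq n(n-1)$). So the conjecture is indeed equivalent to $\mathrm{Pf}\bigl(g(e_i,e_j)\bigr)>0$ for every strictly decreasing sequence of positive even integers $e_1>\cdots>e_{n-1}$, which is a nice reformulation and genuinely goes beyond what the paper does.

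However, the proposal is not a proof: it is a plan with the main step missing. You do not construct the functions $\phi_e$ realizing $g(e,f)$ as a de~Bruijn kernel, you only say ``candidates \ldots\ can be sought,'' and you explicitly flag the Chebyshev-system verification as ``the hard part'' with no argument given. The backup (cofactor expansion plus entrywise positivity of $g$) is, as you note yourself, exactly the kind of alternating-sign problem that does not yield to naive induction. Even the $n=3$ case is asserted rather than proved: the claim $g(e,f)>0$ for even $e>f>0$ amounts to $h(e-2,f)>h(e,f-2)$ for the symmetric coefficients $h$ of $H_s$, and ``a unimodality property \ldots\ verifiable by direct expansion'' is not an argument. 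In short, the Pfaffian reformulation is a genuine contribution, but the positivity itself remains open in your write-up just as it does in the paper.
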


\begin{proposition}
Conjecture~\ref{conjecture_D_by_degree} 
implies 
part $(D_{n})$ of Conjecture~\ref{conjecture_A_and_D}.
\end{proposition}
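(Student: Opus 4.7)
The plan is to unwind the multivariate Taylor expansion of Theorem~\ref{theorem_Taylor_expansion} using the alternating structure of the polynomials $Z_{d}(\Phi;a)$ and a Schur polynomial expansion. After rescaling we may assume $R=1$ and $\|a\|\leq 1$, so that
\begin{align*}
P(\Hf,\Ball(a,1))
=
2^{k}\cdot\beta_{n-k}\cdot\sum_{d\geq n(n-1)}Z_{d}(\Phi;a) .
\end{align*}
Since both $P$ and the Jacobian $J$ are skew-symmetric under the Coxeter group~$W$, their quotient $P/J$ is $W$-invariant. On the interior of a chamber~$T$ we have $\operatorname{sign}(J(a))=(-1)^{T}$, so the target inequality $(-1)^{T}\cdot P(\Hf,\Ball(a,R))<0$ is equivalent to $P/J<0$. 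Hence it suffices to establish $P(\Hf,\Ball(a,1))/J(a)<0$ on the open base chamber $T_{0}=\{a:a_{1}>a_{2}>\cdots>a_{n-1}>|a_{n}|\}$.

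The next step is to combine Lemma~\ref{lemma_Z_d_skew} (skew-symmetry under transpositions $a_{i}\leftrightarrow a_{j}$) with Lemma~\ref{lemma_Z_d_basic_properties}(a) (invariance under each reflection $s_{i}$) to conclude that $Z_{d}(\Phi;a)$ is alternating in the squared variables $a_{1}^{2},\ldots,a_{n}^{2}$. I would then factor
\begin{align*}
Z_{d}(\Phi;a)
=
J_{0}(a)\cdot S_{d}(a_{1}^{2},\ldots,a_{n}^{2}) ,
\end{align*}
where $J_{0}=\prod_{i<j}(a_{i}^{2}-a_{j}^{2})=2^{n(n-1)/2}\cdot J(a)$ and $S_{d}$ is a symmetric polynomial, and expand $S_{d}=\sum_{\lambda}c_{\lambda}^{(d)}\cdot s_{\lambda}(a_{1}^{2},\ldots,a_{n}^{2})$ in Schur polynomials. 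Part~(b) of Lemma~\ref{lemma_Z_d_basic_properties} forces $c_{\lambda}^{(d)}=0$ unless $\lambda_{n}=0$. For such a partition, $c_{\lambda}^{(d)}$ is precisely the coefficient of the leading monomial $a_{1}^{2(\lambda_{1}+n-1)}a_{2}^{2(\lambda_{2}+n-2)}\cdots a_{n-1}^{2(\lambda_{n-1}+1)}$ in $Z_{d}$; since this monomial omits $a_{n}$, only $2$-structures whose matching has $n$ as the isolated vertex can contribute. A short check of the matching sign convention (using that $(-1)^{p-1}=+1$ for $p=n$ because $n$ is odd) identifies this partial sum with the polynomial $Y$ of equation~\eqref{equation_a_n_zero}, so Conjecture~\ref{conjecture_D_by_degree} gives $c_{\lambda}^{(d)}<0$ whenever the monomial has nonzero coefficient, and $c_{\lambda}^{(d)}\leq 0$ in every case.

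Finally, on the interior of $T_{0}$ the values $a_{1}^{2},\ldots,a_{n-1}^{2}$ are distinct positive reals and $a_{n}^{2}\geq 0$; hence each Schur polynomial $s_{\lambda}(a_{1}^{2},\ldots,a_{n}^{2})$ with $\lambda_{n}=0$ is strictly positive, because it stabilizes to a Schur polynomial in $n-1$ positive arguments. Combined with $J_{0}(a)>0$ on $T_{0}$ this yields $Z_{d}(\Phi;a)/J(a)\leq 0$ for every $d$, and the empty-partition contribution (which reproduces the negative leading coefficient exhibited in Table~\ref{table_D}) makes the inequality strict at $d=n(n-1)$. Summing over $d$ produces $P(\Hf,\Ball(a,1))/J(a)<0$ on $T_{0}$, and $W$-invariance propagates the inequality to every chamber. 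The main technical obstacle will be the bookkeeping that identifies the $a_{n}$-free part of $Z_{d}$ with $Y$: one must verify that no $2$-structure whose matching has isolated vertex $p<n$ contributes to the leading monomial of the prescribed shape, and correctly match the matching sign $(-1)^{p-1}$ in the definition of $Z_{d}$ with the sign used in~\eqref{equation_a_n_zero}.
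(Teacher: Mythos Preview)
Your proposal is correct and follows essentially the same approach as the paper: both arguments expand $Z_{d}(\Phi;a)/J(a)$ as a negative combination of Schur polynomials in the squared variables via the bialternant formula, identify the Schur coefficients with the coefficients of the decreasing-exponent monomials in $Y$, invoke Conjecture~\ref{conjecture_D_by_degree} for their sign, and finish with Schur positivity. Your identification of the relevant monomial coefficient with the corresponding coefficient of $Y$ (via the observation that a $2$-structure with isolated vertex $p$ cannot contribute to any monomial containing $a_{p}$) is in fact a bit more precise than the paper's corresponding step.
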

\begin{proof}
Begin by observing that setting the last variable $a_{n}$ to zero
in $Z_{d}(\Phi; a)$ yields the polynomial~$Y$.
Next, any monomial in $Z_{d}(\Phi; a)$
has the form 
\begin{align}
a_{\sigma(1)}^{2\lambda_{1}+2(n-1)} \cdot
a_{\sigma(2)}^{2\lambda_{2}+2(n-2)} \cdots
a_{\sigma(n-1)}^{2\lambda_{n-1}+2} ,
\label{equation_lambda_monomial}
\end{align}
for some permutation $\sigma$ in the symmetric group $\mathfrak{S}_{n}$.
Since the symmetric group $\mathfrak{S}_{n}$ is a subgroup of the Coxeter group $W$,
and the action of $\mathfrak{S}_{n}$ is skew-symmetric,
we hence know that all the monomials are of the form
\begin{align*}
(-1)^{\sigma}
\cdot
a_{\sigma(1)}^{2\lambda_{1}+2(n-1)} \cdot
a_{\sigma(2)}^{2\lambda_{2}+2(n-2)} \cdots
a_{\sigma(n-1)}^{2\lambda_{n-1}+2} .
\end{align*}
Now summing over all permutations $\sigma$ in $\mathfrak{S}_{n}$ yields the determinant
\begin{align}
\label{equation_sum}
&
\sum_{\sigma \in \mathfrak{S}_{n}}
(-1)^{\sigma}
\cdot
a_{\sigma(1)}^{2\lambda_{1}+2(n-1)} \cdot
a_{\sigma(2)}^{2\lambda_{2}+2(n-2)} \cdots
a_{\sigma(n-1)}^{2\lambda_{n-1}+2}  \\
= &
\nonumber
\det
\begin{pmatrix}
a_{1}^{2\lambda_{1}+2(n-1)} & a_{2}^{2\lambda_{1}+2(n-1)} & \cdots & a_{n}^{2\lambda_{1}+2(n-1)} \\
a_{1}^{2\lambda_{2}+2(n-2)} & a_{2}^{2\lambda_{2}+2(n-2)} & \cdots & a_{n}^{2\lambda_{2}+2(n-2)} \\
\vdots & \vdots & \ddots & \vdots \\
a_{1}^{2\lambda_{n-1}+2} & a_{2}^{2\lambda_{n-1}+2} & \cdots & a_{n}^{2\lambda_{n-1}+2} \\
1 & 1 & \cdots & 1
\end{pmatrix} .
\end{align}
But
the bialternant formula of Jacobi
factors this determinant as the Vandermonde product times
the Schur function~$s_{\lambda}$, both in the variables $a_{1}^{2}, \ldots, a_{n}^{2}$.
Thus the sum in~\eqref{equation_sum} is given by
\begin{align*}
2^{n \cdot (n-1)/2} \cdot J(a) \cdot s_{\lambda}(a_{1}^{2}, a_{2}^{2}, \ldots, a_{n}^{2}) .
\end{align*}
By the combinatorial interpretation of the Schur function,
we know that
$s_{\lambda}(a_{1}^{2}, a_{2}^{2}, \ldots, a_{n}^{2})$
will always be non-negative.
Since we assumed that the monomial in equation~\eqref{equation_lambda_monomial}
when $\sigma$ is the identity permutation has a negative coefficient, we know
that if we sum the monomials in~\eqref{equation_sum} with their coefficients
we obtain that $Z_{d}(\Phi;a)$ is negative.
But this is the degree $d$ part of the multivariate Taylor expansion of the pizza quantity,
implying part $(D_{n})$ of the conjecture.
\end{proof}

For the case $n=3$ we have the following strengthening of
Conjecture~\ref{conjecture_D_by_degree}.
\begin{conjecture}
For $d \geq 6$, the polynomial
$-T_{d}\left((a_{1}+a_{2})/{\sqrt{2}}, (a_{1}-a_{2})/{\sqrt{2}}\right)/
(a_{1}^{2} - a_{2}^{2})$
belongs to
$\Rrr_{\geq 0}[a_{1}^{2}+a_{2}^{2}, a_{1}^{2} \cdot a_{2}^{2}]$.
\end{conjecture}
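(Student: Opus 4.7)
The plan is to give a double-integral representation of $T_d(x_1,x_2)$ that makes the claimed non-negativity manifest after the substitution. Set $d=2m+2$ with $m\ge 2$ (since $T_d=0$ unless $d$ has the same parity as $k=2$), write $A=a_1^2+a_2^2$ and $B=a_1^2 a_2^2$, and first record that
\[
c_m=(-1)^m\binom{1/2}{m}=-\frac{(2m-2)!}{2^{2m-1}\,m!\,(m-1)!}<0,
\]
so that $-c_m>0$ throughout our range.

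Using the identity $\tfrac{1}{(2r_1+1)(2r_2+1)}=\int_0^1\!\!\int_0^1 \sigma^{2r_1}\tau^{2r_2}\,d\sigma\,d\tau$ together with the binomial theorem, I would rewrite the definition of $T_d$ as
\[
T_d(x_1,x_2) = c_m\,x_1 x_2 \int_0^1\!\!\int_0^1 \bigl(x_1^2\sigma^2 + x_2^2\tau^2\bigr)^{m}\,d\sigma\,d\tau.
\]
Substituting $y_1=(a_1+a_2)/\sqrt 2$ and $y_2=(a_1-a_2)/\sqrt 2$ and expanding gives
\[
y_1^2\sigma^2+y_2^2\tau^2 = \frac{A(\sigma^2+\tau^2)+2a_1 a_2(\sigma^2-\tau^2)}{2}.
\]

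The key observation is that the swap $\sigma\leftrightarrow\tau$ negates $(\sigma^2-\tau^2)$ while leaving $(\sigma^2+\tau^2)$ invariant, so after expanding the $m$th power by the binomial theorem and integrating over $[0,1]^2$, every odd power of $a_1 a_2$ contributes zero. Writing $j=2\ell$ in the surviving even terms and using $(2a_1 a_2)^{2\ell}=4^\ell B^\ell$ yields
\[
T_d(y_1,y_2) = \frac{c_m\,y_1 y_2}{2^m}\sum_{\ell=0}^{\lfloor m/2\rfloor}\binom{m}{2\ell}\,4^\ell\,J_{m,\ell}\,A^{m-2\ell}\,B^\ell,
\]
where
\[
J_{m,\ell}:=\int_0^1\!\!\int_0^1 (\sigma^2+\tau^2)^{m-2\ell}(\sigma^2-\tau^2)^{2\ell}\,d\sigma\,d\tau > 0,
\]
since the integrand is continuous, non-negative on $[0,1]^2$, and strictly positive off the diagonal.

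Because $2 y_1 y_2 = a_1^2-a_2^2$, dividing gives
\[
-\frac{T_d(y_1,y_2)}{a_1^2-a_2^2} = \frac{-c_m}{2^{m+1}}\sum_{\ell=0}^{\lfloor m/2\rfloor}\binom{m}{2\ell}\,4^\ell\,J_{m,\ell}\,A^{m-2\ell}\,B^\ell,
\]
and every factor on the right, namely $-c_m$, $\binom{m}{2\ell}$, $4^\ell$ and $J_{m,\ell}$, is non-negative. This exhibits the polynomial as a member of $\Rrr_{\ge 0}[A,B]$. In truth there is no serious obstacle here; the only items requiring real care are the sign computation for $c_m$ and the parity-based cancellation of the odd $a_1 a_2$-contributions under $\sigma\leftrightarrow\tau$. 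The same double-integral perspective, paired with an alternating-sign cancellation across the maximal matchings of $\{1,\ldots,n-1\}$, seems the natural route toward the more general Conjecture~\ref{conjecture_D_by_degree}; in that higher-dimensional setting the matching-sum cancellation is precisely where the combinatorial difficulty appears to concentrate.
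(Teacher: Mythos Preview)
Your argument is correct, and it actually \emph{proves} the statement, which the paper leaves as an open conjecture (no proof is offered there). The integral representation
\[
T_{2m+2}(x_1,x_2)=c_m\,x_1x_2\int_0^1\!\!\int_0^1(x_1^2\sigma^2+x_2^2\tau^2)^m\,d\sigma\,d\tau
\]
is a clean device: after the orthogonal substitution $y_i=(a_1\pm a_2)/\sqrt{2}$ the integrand becomes a polynomial in $A(\sigma^2+\tau^2)$ and $2a_1a_2(\sigma^2-\tau^2)$, and the symmetry $\sigma\leftrightarrow\tau$ kills all odd powers of $a_1a_2$, leaving only monomials $A^{m-2\ell}B^\ell$ with manifestly positive coefficients $(-c_m)\binom{m}{2\ell}4^\ell J_{m,\ell}/2^{m+1}$. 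Since $A=a_1^2+a_2^2$ and $B=a_1^2a_2^2$ are algebraically independent, this expansion is the unique representation in $\Rrr[A,B]$, so membership in $\Rrr_{\ge 0}[A,B]$ is established. Your sign computation for $c_m$ when $m\ge 2$ (indeed $m\ge 1$) is also correct.

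Two minor remarks. First, your proof in fact works already for $d=4$ (i.e.\ $m=1$), slightly strengthening the stated range; the paper presumably begins at $d\ge 6$ only because $|\Phi^+|=6$ for $D_3$ and lower-degree terms vanish in the Taylor expansion anyway. Second, your closing suggestion about extending the method to Conjecture~\ref{conjecture_D_by_degree} is reasonable in spirit, but note that for general odd $n$ the $2$-structures have type $A_1^{n-1}$ and the matching sum introduces genuine sign interaction that the single-matching case $n=3$ avoids entirely; the parity trick over $[0,1]^{n-1}$ alone will not suffice there.
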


\section{Concluding remarks}

The other pizza theorem proved in~\cite{EMR_pizza}
states that for a Coxeter arrangement $\Hf$, different from~$A_{1}^{n}$, such that
the negative of the identity map belongs to the associated Coxeter group $W$,
and for a measurable set $K$, stable under the action of $W$, such that
the convex hull of $\{w(a) : w \in W\}$ is contained in $K$, then
the pizza quantity vanishes, that is,
$P(\Hf, K+a) = 0$;
see~\cite[Theorem~1.2]{EMR_pizza}.
For the case when the Coxeter group $W$ does not contain the negative map,
can we obtain a multivariate Taylor series for the pizza quantity?
In the irreducible types, these cases would be $A_{n}$ for $n \geq 2$,
$D_{n}$ for $n$ odd, the sporadic type~$E_{6}$
and the dihedral type $I_{2}(m)$ where $m$ is odd.

Note that Proposition~\ref{proposition_n_k_same_parity}
uses analysis in its proof. 
There are two pizza theorems in~\cite[Theorems~1.1 and~1.2]{EMR_pizza}.
The first result is Theorem~\ref{theorem_second_in_introduction}
and the second one is discussed in the previous paragraph.
The paper~\cite{EMR_pizza_2} provides a dissection proof
of the second theorem.
Hence it would be interesting if
the usage of analysis could be removed from
the proof of Proposition~\ref{proposition_n_k_same_parity}
such that we have a completely non-analytic proof of the result.

\begin{question}
{\rm
Can the following linear map be described in a more concise way?
Given a root system $\Phi$ whose $2$-structures have type $A_{1}^{k}$
and a symmetric polynomial $p(x_{1}, \ldots, x_{k})$ in
$k$ variables. Note that the polynomial $p(x_{1}^{2}, \ldots, x_{k}^{2})$
is stable under the action of type~$A_{1}^{k}$,
that is,
$p(x_{1}^{2}, \ldots, x_{k}^{2})$
is invariant under the substitution $x_{i} \longmapsto -x_{i}$.
Let $q(x_{1}, \ldots, x_{k}) = x_{1} \cdots x_{k} \cdot p(x_{1}^{2}, \ldots, x_{k}^{2})$,
that is, this polynomial is skew-symmetric with respect to $A_{1}^{k}$.
For a $2$-structure $\varphi$ of~$\Phi$ let
\begin{align*}
q(\varphi; a) & = q((e_{1},a), \ldots, (e_{k},a)) ,
\end{align*}
where $\varphi^{+} = \{e_{1}, \ldots, e_{k}\}$.
Finally, define $r(a)$ to be the sum
\begin{align*}
r(a)
& =
\sum_{\varphi}
\epsilon(\varphi) \cdot q(\varphi; a) .
\end{align*}
Observe that the polynomial $r(a)$ is divisible by the Jacobian of $\Phi$.
Note that the map sending $p$ to the quotient $r/J$ is linear.
Hence, is there a better way to understand 
the linear map $p \longmapsto r/J$?
}
\end{question}

\section*{Acknowledgements}

The author thanks Theodore Ehrenborg
and the referees for their comments
on an earlier version of this paper.
This work was partially supported by a grant from the
Simons Foundation (\#854548 to Richard Ehrenborg).

\bibliographystyle{amsplain}
\bibliography{bib}

\end{document}